\newtheorem{thm}{Theorem}[section]
\newtheorem{defn}[thm]{Definition}
\newtheorem{cor}[thm]{Corollary}
\newtheorem{lem}[thm]{Lemma}
\newtheorem{remark}[thm]{Remark}
\newcommand{\ds}{\displaystyle}
\begin{document}

\title{Atomic subspaces for operators}

\author[Bhandari]{A. Bhandari}

\address{Dept. of Mathematics\\ NIT Meghalaya\\ Shillong 793003\\ India}

\email{animesh@nitm.ac.in}

\author[Mukherjee]{S. Mukherjee}
\address{Dept. of Mathematics\\ NIT Meghalaya\\ Shillong 793003\\ India}
\email{saikat.mukherjee@nitm.ac.in}
\thanks{Second author is supported by NIT Meghalaya Start-up Grant Project}
\subjclass[2010]{42C15, 46C15}

\keywords{Atomic subspaces, Frames, $K$-fusion frames}

\begin{abstract}
This paper introduces the concept of atomic subspaces with respect to a bounded linear operator. Atomic subspaces generalize fusion frames and this generalization leads to the notion of $K$-fusion frames. Characterizations of $K$-fusion frames are discussed. Various properties of $K$-fusion frames, for example, direct sum, intersection, are studied.
\end{abstract}

\maketitle

\section{Introduction}\label{intro}
Notion of Hilbert space frames was first introduced by Duffin and Schaeffer \cite{Du52} in 1952 to reconstruct signals. Much later in the year 1986, the fundamental concept of frames and their significance in signal processing, image processing and data processing were presented by Daubechies, Grossman and Meyer  \cite{Da86}. Frame theory plays an important role in various fields and have been widely applied in signal processing \cite{Fe99}, sampling theory \cite{El03}, coding and communications (\cite{St03}, \cite{Ho04}) and so on.

It is a well-known fact that every element in a separable Hilbert space $\mathcal{H}$ can be explicitly represented as a linear combination of an orthonormal basis in $\mathcal H$ with the help of Fourier coefficients. But if one of the basis elements, for some reason, is removed, the explicit representation may not hold. Primarily due to this reason an overcomplete system was introduced which satisfies the explicit representation  but more flexible when $f\in \mathcal{H}$ is to be reconstructed. Such an overcomplete system is called a ``Frame''.

L. G\v{a}vru\c{t}a in \cite{Ga12} was first to introduce the notion of $K$-frames to study the nature of {\it atomic systems} for a separable Hilbert space $\mathcal H$ with respect to a bounded linear operator $K$ on $\mathcal H$. In \cite{Ga15}, G\v{a}vru\c{t}a further studied atomic systems for operators in reproducing kernel Hilbert spaces, especially Bergman and Fock spaces. It is well-known fact that $K$-frames are more general than the classical frames and due to higher generality of $K$-frames, many properties of frames may not hold for $K$-frames.

In the 21st century scientists introduced fusion frames to handle massive amount of data to obtain mathematical framework to model and analyze such problems, which are otherwise almost impossible to handle. Moreover fusion frames are also significantly important mathematical gadget for theory oriented mathematical problems in frame theory. The notion of fusion frames (or frames of subspaces) was first introduced by Casazza et. al. (see \cite{Ca04}, \cite{Ca08}). There are so many applications  of fusion frames like coding theory, compressed sensing, data processing and so on. A fusion frame is a frame-like collection of closed subspaces in a Hilbert space. In frame theory, amplitudes of projection vectors onto frame elements are used to represent signals whereas in the fusion frame theory, signals are represented by its projection vectors onto fusion frame subspaces. Also more specifically we may acquire that fusion frames are the generalization of conventional classical frames and special cases of $g$-frames in the field of frame theory.

This paper presents notion of {\it atomic subspaces} with respect to a bounded linear operator on a separable Hilbert space which leads to the concept of $K$-fusion frames, a generalization of fusion frames. This also generalize some results of \cite{Ga12}.

The paper is organized as follows. In Section \ref{Sec-Preli}, we recall basic definitions and results related to frames, $K$-frames and fusion frames. Atomic subspaces and $K$-fusion frames are introduced and discussed in Section \ref{Sec-atom}. Finally in Section \ref{Sec-result} we characterize $K$-fusion frames and establish various properties of the same.

Throughout the paper, $\mathcal{H}$ is a separable Hilbert space. We denote by $\mathcal{L}(\mathcal{H}_1, \mathcal{H}_2)$ the space of all bounded linear operators from  $\mathcal{H}_1$ into $\mathcal{H}_2$. For $T \in \mathcal{L}\mathcal{(H)}$, we denote $D(T), N(T)$ and $R(T)$ for domain, null space and range of $T$, respectively. We consider the index set $I$ to be finite or countable.
\section{Preliminaries}\label{Sec-Preli}
In this section we recall basic definitions and results needed in this paper. We refer to the book by Ole Christensen \cite{Ch03} for an introduction to frame theory.

\subsection{Frame} A collection  $\{ f_i \}_{i\in I}$ in $\mathcal{H}$ is called a \emph{frame} if there exist constants $A,B >0$ such that \begin{equation}\label{Eq:Frame} A\|f\|^2~ \leq ~\sum_{i\in I} |\langle f,f_i\rangle|^2 ~\leq ~B\|f\|^2,\end{equation}for all $f \in \mathcal{H}$. The numbers $A, B$ are called \emph{frame bounds}. The supremum over all $A$'s and infimum over all $B$'s satisfying above inequality are called the \emph{optimal frame bounds}. The frame is called a \emph{tight frame} if $A=B$ and if $A=B=1$ it is called a \emph{Parseval frame}. The frame is called \emph{exact} if it ceases to be a frame whenever any single element is removed from the collection. If a collection satisfies only the right inequality in (\ref{Eq:Frame}), it is called a {\it Bessel sequence}.

Given a frame $\{f_i\}_{i\in I}$ of $\mathcal{H}$. The \emph{pre-frame operator} or \emph{synthesis operator} is a bounded linear operator $T: l^2(I)\rightarrow \mathcal{H}$ and is defined by $T\{c_i\} = \ds \sum_{i\in I} c_i f_i$. The adjoint of $T$, $T^*: \mathcal{H} \rightarrow l^2(I)$, given by $T^*f = \{\langle f, f_i\rangle\}$, is called the \emph{analysis operator}. The \emph{frame operator}, $S$, is obtained by composing $T$ with $T^*$ , $S=TT^*$. That is, $S: \mathcal{H}\rightarrow \mathcal{H}$  such that $$Sf=TT^*f = \sum_{i\in I} \langle f, f_i\rangle f_i.$$ The frame operator is bounded, positive, self adjoint and invertible.

\underline{\bf{Reconstruction formula}}: Every element in $\mathcal{H}$ can be represented using frame elements as follows:
\begin{equation}\label{Eq:Frame-recons}
f = \sum_{i\in I} \langle f, S^{-1}f_i\rangle f_i = \sum_{i\in I} \langle f, f_i\rangle S^{-1}f_i
\end{equation}
Since the frame elements are not necessarily linearly independent, this representation is not unique, in general.

\subsection{$K$-Frame} Let $K \in \mathcal L(\mathcal H)$, then a sequence $\lbrace f_i \rbrace_{i\in I}$ in $\mathcal H$ is called a {\it $K$-frame} for $\mathcal H$ if there exist positive constants $A, B$ such that
\begin{equation}\label{Eq:K-fram}
 A\|K^* f\|^2 \leq \sum_{i\in I} |\langle f,f_i \rangle |^2 \leq B\|f\|^2, \end{equation}
for all $f \in \mathcal H$ and the above sequence is said to be a {\it tight $K$-frame} if
\begin{equation}\label{Eq:K-tight-fram}
 A\|K^* f\|^2 = \sum_{i\in I} | \langle f,f_i \rangle |^2, \end{equation}
for all $f\in \mathcal H$.
\subsection{Fusion Frame} Given a Hilbert space $\mathcal H$, consider a collection of closed subspaces $\lbrace \mathcal W_i \rbrace_{i \in I}$  of $\mathcal H$ and a collection of positive weights $\lbrace v_i \rbrace_{i \in I}$. A family of weighted closed subspaces $(\mathcal W, v) = \lbrace (\mathcal W_i, v_i) : i\in I\rbrace$ is called a fusion frame for $\mathcal H$, if there exist constants $0<A \leq B <\infty$ satisfying
\begin{equation}\label{Eq:Fus-frame}
A\|f\|^2 \leq \sum_{i \in I} v_i ^2 \|P_{\mathcal W_i}f\|^2 \leq B\|f\|^2, \end{equation}
where $P_{\mathcal W_i}$ is the orthogonal projection from $\mathcal H$ onto $\mathcal W_i$. The constants $A$ and $B$ are called {\it fusion frame bounds}. If $A=B$ then the fusion frame is called a {\it tight fusion frame}, if $A=B=1$ then it is called a {\it Parseval fusion frame} and the fusion frame is called {\it orthonormal} if $\mathcal H = \bigoplus \mathcal W_i$. If $v_i=v,~\forall~ i\in I$, it is called {\it $v$-uniform fusion frame}. A collection of closed subspaces, satisfying only the right inequality in \ref{Eq:Fus-frame}, is called a {\it fusion Bessel sequence}.

For a family of closed subspaces, $\lbrace \mathcal W_i \rbrace_{i \in I}$, of $\mathcal H$, the corresponding $l^2$ space is defined by $\left(\sum_{i \in I} \bigoplus \mathcal W_i\right)_{l^2} = \lbrace \lbrace f_i \rbrace_{i \in I} : f_i \in \mathcal W_i, \sum_{i \in I} \|f_i\|^2 < \infty \rbrace$ with inner product is given by $\langle \lbrace f_i \rbrace, \lbrace g_i \rbrace \rangle = \sum_{i \in I} \langle f_i , g_i \rangle_{\mathcal{H}} $.

Let $\lbrace (\mathcal W_i, v_i) \rbrace_{i \in I}$ be a fusion frame. Then the synthesis operator $T_{\mathcal W}: (\sum_{i \in I} \bigoplus \mathcal W_i)_{l^2} \rightarrow \mathcal H$ is defined as $T_{\mathcal W} (f)=\sum_{i \in I} v_i f_i$ for all $f=\lbrace f_i \rbrace_{i \in I} \in (\sum_{i \in I} \oplus \mathcal W_i)_{l^2}$ and the analysis operator $T^*_{\mathcal W}: \mathcal H \rightarrow  (\sum_{i \in I} \oplus \mathcal W_i)_{l^2} $ is defined as  $T^*_{\mathcal W} (f)=\lbrace v_i P_{\mathcal W_i} (f) \rbrace_{i \in I} $. It is well-known that (see \cite{Ca04}) the synthesis operator $ T_{\mathcal W}$ of a fusion frame is bounded, linear and onto, whereas the corresponding analysis operator $ T_{\mathcal W} ^*$  is (possibly into) an isomorphism. Corresponding fusion frame operator is defined as $S_\mathcal W (f) = T_{\mathcal W}T_{\mathcal W}^*(f)=\sum_{i \in I} v_i ^2 P_{\mathcal W_i}(f)$.  $S_\mathcal{W}$ is bounded, positive, self adjoint and invertible.

{\bf Reconstruction formula:} Any signal $f \in \mathcal H$ can be expressed by its fusion frame measurements $\lbrace v_i P_{\mathcal W_i} f \rbrace_{i \in I}$ as
\begin{equation}\label{Eq:Fusion-frame-recons} f=\sum_{i \in I} v_i S_\mathcal W ^{-1} (v_i P_{\mathcal W_i} f) .\end{equation}

{\bf Orthonormal basis in $(\sum_{i \in I} \bigoplus \mathcal W_i)_{l^2}$:} Consider a family of closed subspaces $\lbrace \mathcal W_i \rbrace_{i \in I}$  of $\mathcal H$. Let  $\mathcal U = \lbrace u_1, u_2,....\rbrace$ be an orthonormal basis  for $\mathcal H$ and consider a family of sets $\lbrace J_i \rbrace_{i \in I}$ such that $J_i = \mathcal W_i \cap \mathcal U$ and denote the cardinality of $J_k$ by $|J_k|$. For each $k\in I$, define $I$-tuples, $e_k ^{l_k} = (\delta_i ^k J_k (l_k))_{i \in I}$, $l_k = 1,2,...,|J_k|$, where $J_k (l_k)$ is the $l_ k$-th element of $J_k$. It is easy to verify that the collection $\lbrace e_i ^{l_i} \rbrace_{i \in I}$ is countable and forms an orthonormal basis for $(\sum_{i \in I} \bigoplus \mathcal W_i)_{l^2}$.

We recall Douglas' factorization theorem (see \cite{Do66}) which is required to present few results.

\begin{thm}(Douglas' factorization theorem)\label{Thm-Douglas} Let $\mathcal{H}_1, \mathcal{H}_2,$ and $\mathcal H$ be Hilbert spaces and $S\in \mathcal L(\mathcal H_1, \mathcal H)$, $T\in \mathcal L(\mathcal H_2, \mathcal H)$. Then the following are equivalent:
\begin{enumerate}
\item $R(S) \subseteq R(T)$.
\item $SS^* \leq \alpha TT^*$ for some $\alpha >0$.
\item $S = TL$ for some $L \in \mathcal L(\mathcal H_1, \mathcal H_2)$.
\end{enumerate}
\end{thm}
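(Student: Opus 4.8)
The plan is to establish the cycle of implications $(3)\Rightarrow(1)$, $(3)\Rightarrow(2)$, $(2)\Rightarrow(3)$, and $(1)\Rightarrow(3)$, which together yield all three equivalences. The first two are immediate: if $S=TL$ with $L\in\mathcal L(\mathcal H_1,\mathcal H_2)$, then $R(S)=T(L(\mathcal H_1))\subseteq R(T)$, giving $(1)$; and for every $h\in\mathcal H$ one has $\|S^*h\|^2=\|L^*T^*h\|^2\le\|L\|^2\|T^*h\|^2$, i.e.\ $\langle SS^*h,h\rangle\le\|L\|^2\langle TT^*h,h\rangle$, so $(2)$ holds with, say, $\alpha=\|L\|^2+1$.

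The first substantive step is $(2)\Rightarrow(3)$. Assuming $SS^*\le\alpha TT^*$, I would define a map $M$ on the (not necessarily closed) subspace $R(T^*)\subseteq\mathcal H_2$ by $M(T^*h)=S^*h$. Well-definedness and boundedness come from the single estimate
\[
\|S^*h-S^*h'\|^2=\langle SS^*(h-h'),h-h'\rangle\le\alpha\langle TT^*(h-h'),h-h'\rangle=\alpha\|T^*h-T^*h'\|^2,
\]
valid for all $h,h'\in\mathcal H$: it shows $T^*h=T^*h'$ forces $S^*h=S^*h'$, and that $\|M(T^*h)\|\le\sqrt{\alpha}\,\|T^*h\|$. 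Hence $M$ extends by continuity to $\overline{R(T^*)}$, and setting $M=0$ on $\overline{R(T^*)}^{\perp}=N(T)$ produces $M\in\mathcal L(\mathcal H_2,\mathcal H_1)$ with $MT^*=S^*$. Taking adjoints and putting $L:=M^*$ gives $TL=S$ with $L\in\mathcal L(\mathcal H_1,\mathcal H_2)$, as required.

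The implication I expect to be the main obstacle is $(1)\Rightarrow(3)$, since it rests on the closed graph theorem and on handling the range of $T$ carefully. Assuming $R(S)\subseteq R(T)$, for each $x\in\mathcal H_1$ the equation $Ty=Sx$ is solvable in $\mathcal H_2$; using the orthogonal decomposition $\mathcal H_2=N(T)\oplus N(T)^{\perp}$ there is a unique solution $Lx$ lying in $N(T)^{\perp}$ (equivalently, the minimal-norm solution), and this uniqueness makes $x\mapsto Lx$ linear. For boundedness I would invoke the closed graph theorem: if $x_n\to x$ and $Lx_n\to z$, then $T(Lx_n)=Sx_n\to Sx$ and $T(Lx_n)\to Tz$ by continuity of $S$ and $T$, so $Tz=Sx$, while $z\in N(T)^{\perp}$ because that subspace is closed; hence $z=Lx$. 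Thus $L\in\mathcal L(\mathcal H_1,\mathcal H_2)$ and $S=TL$.

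The routine verifications I would leave to the reader are the linearity of $L$ in the last step and the elementary adjoint bookkeeping in $(2)\Rightarrow(3)$ and $(3)\Rightarrow(2)$; the genuinely load-bearing ingredients are the norm inequality used to define $M$ and the closed graph argument for $(1)\Rightarrow(3)$.
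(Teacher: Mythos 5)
Your proof is correct, and all three implications are carried out in the standard way: the cycle $(3)\Rightarrow(1)$, $(1)\Rightarrow(3)$, $(3)\Rightarrow(2)$, $(2)\Rightarrow(3)$ does establish the full equivalence, the norm estimate $\|S^*h\|^2\le\alpha\|T^*h\|^2$ correctly drives the construction of $M$ on $\overline{R(T^*)}$, and the closed graph argument for the minimal-norm solution operator is sound. Note, however, that the paper itself offers no proof of this statement: it is recalled as a known result with a citation to Douglas's original 1966 paper, so there is nothing to compare against beyond observing that your argument is essentially the classical one from that reference (Douglas's own route goes directly $(2)\Rightarrow(3)$ via the densely defined map $T^*h\mapsto S^*h$, exactly as you do).
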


\section{Atomic subspaces}\label{Sec-atom}

We define atomic subspace of $\mathcal H$ with respect to a bounded linear operator.
\begin{defn}\label{Defn:atom}
Let $K \in \mathcal L(\mathcal H)$ and consider a family of closed subspaces $\lbrace \mathcal W_i \rbrace_{i \in I}$ and a family of positive weights $\lbrace v_i \rbrace_{i \in I}$. Then $\lbrace (\mathcal W_i ,v_i)\rbrace_{i \in I}$ is said to be an {\it atomic subspace} of $\mathcal H$ with respect to $K$ if the following conditions hold:
\begin{enumerate}
\item[(a)] $\sum_{i \in I} v_i f_i$ is convergent for all $\lbrace f_i \rbrace_{i \in I} \in  (\sum_{i \in I} \oplus \mathcal W_i)_{l^2} $.

\item[(b)] For every $f \in \mathcal H$, there exist $\lbrace f_i \rbrace_{i \in I} \in  (\sum_{i \in I} \oplus \mathcal W_i)_{l^2} $ such that $K f=\sum_{i \in I} v_i f_i$ and $\| \lbrace f_i \rbrace\|_{(\sum_{i \in I} \oplus \mathcal W_i)_{l^2}} \leq C \|f\|_{\mathcal H}$ for some $C>0$.
\end{enumerate}
\end{defn}

\begin{remark}
Condition (a) in Definition \ref{Defn:atom} is equivalent to say that $\lbrace (\mathcal W_i ,v_i)\rbrace_{i \in I}$ is a fusion Bessel sequence.
\end{remark}
In the following we present the existence theorem of atomic subspaces.

\begin{thm}
A separable Hilbert space has an atomic subspace with respect to every bounded linear operator.
\end{thm}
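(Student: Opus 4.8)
The plan is to exhibit the simplest possible candidate: take an orthonormal basis of $\mathcal H$ and turn it into a family of one-dimensional weighted subspaces. Concretely, since $\mathcal H$ is separable, fix an orthonormal basis $\lbrace u_i \rbrace_{i \in I}$ of $\mathcal H$, and set $\mathcal W_i = \overline{\mathrm{span}}\lbrace u_i \rbrace$ and $v_i = 1$ for every $i \in I$. I claim that $\lbrace (\mathcal W_i, v_i) \rbrace_{i \in I}$ is an atomic subspace of $\mathcal H$ with respect to an arbitrary $K \in \mathcal L(\mathcal H)$, and I would verify conditions (a) and (b) of Definition \ref{Defn:atom} in turn.

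For (a): any $\lbrace f_i \rbrace_{i \in I} \in (\sum_{i\in I} \oplus \mathcal W_i)_{l^2}$ has $f_i = c_i u_i$ for scalars $c_i$ with $\sum_{i\in I} |c_i|^2 = \sum_{i \in I} \|f_i\|^2 < \infty$, so $\sum_{i\in I} v_i f_i = \sum_{i \in I} c_i u_i$ converges in $\mathcal H$ by completeness and the fact that $\lbrace u_i \rbrace$ is an orthonormal basis. (Equivalently, $\lbrace (\mathcal W_i, 1) \rbrace_{i\in I}$ is in fact a Parseval fusion frame, since $\sum_{i\in I}\|P_{\mathcal W_i}f\|^2 = \sum_{i\in I}|\langle f,u_i\rangle|^2 = \|f\|^2$, hence in particular a fusion Bessel sequence, which is exactly what the Remark says condition (a) amounts to.)

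For (b): given $f \in \mathcal H$, expand $Kf$ in the orthonormal basis, $Kf = \sum_{i \in I} \langle Kf, u_i \rangle u_i$, and put $f_i = \langle Kf, u_i \rangle u_i \in \mathcal W_i$. Then $\sum_{i\in I} v_i f_i = Kf$, and Parseval's identity gives $\|\lbrace f_i \rbrace\|^2_{(\sum_{i\in I} \oplus \mathcal W_i)_{l^2}} = \sum_{i\in I} |\langle Kf, u_i\rangle|^2 = \|Kf\|^2 \leq \|K\|^2 \|f\|^2$, so condition (b) holds with $C = \max\lbrace 1, \|K\| \rbrace$, the maximum with $1$ being taken only to ensure $C>0$ in the degenerate case $K = 0$.

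I do not anticipate a genuine obstacle here; the argument is short, and the only points needing a word of care are the convergence of $\sum_{i\in I} c_i u_i$ in (a) (immediate from the $l^2$ condition) and the requirement $C>0$ in (b) when $K$ is the zero operator (handled by padding the constant). One could equally take the trivial single-subspace family $I = \lbrace 1 \rbrace$, $\mathcal W_1 = \mathcal H$, $v_1 = 1$, with $f_1 = Kf$; I prefer the orthonormal-basis construction because it simultaneously displays a nontrivial fusion-frame structure underlying the atomic subspace.
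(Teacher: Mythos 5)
Your proof is correct, but it takes a genuinely different (and simpler) route than the paper. The paper also starts from an orthonormal basis $\lbrace e_n\rbrace$, but it pushes it through $K$: it sets $\mathcal W_n = K(\mathrm{span}\lbrace e_n\rbrace)$ with weights $v_n = \|Ke_n\|$, computes $P_{\mathcal W_n}f = \frac{\langle f, Ke_n\rangle}{v_n^2}Ke_n$, and obtains the identity $\sum_n v_n^2\|P_{\mathcal W_n}f\|^2 = \|K^*f\|^2$, i.e.\ a construction genuinely adapted to $K$ that is a \emph{tight} $K$-fusion frame in the sense of Theorem \ref{Thm-Atom-Kfusion}. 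You instead keep the subspaces $\mathcal W_i = \mathrm{span}\lbrace u_i\rbrace$ with unit weights, which form a Parseval fusion frame independent of $K$, and verify conditions (a) and (b) directly by expanding $Kf$ in the basis. What your approach buys: a single family that works simultaneously for every $K$, and it quietly sidesteps a defect in the paper's construction --- when $Ke_n = 0$ the paper's weight $v_n = \|Ke_n\|$ is zero (not a positive weight, and the projection formula divides by $v_n^2$), so the paper's proof as written needs a repair (e.g.\ discarding those indices) that yours does not. What the paper's approach buys: a construction that actually reflects the operator $K$ in the geometry of the subspaces and realizes the lower bound $\|K^*f\|^2$ exactly, which better motivates the subsequent definition of $K$-fusion frames. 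Both arguments are complete modulo the caveat above; your handling of the $K=0$ case via $C=\max\lbrace 1,\|K\|\rbrace$ is a fine detail the paper does not bother with.
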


\begin{proof}
Let $K \in \mathcal L(\mathcal H)$ and consider $\lbrace e_n \rbrace_{n \in \mathbb N}$ as an orthonormal basis for $\mathcal H$. Define $\mathcal U_n =span \lbrace e_n \rbrace$ and $\mathcal W_n = K(\mathcal U_n)$ for $n\in \mathbb{N}$. Then $\lbrace \mathcal U_n \rbrace$, $\lbrace \mathcal W_n \rbrace$ form sequences of closed subspaces of $\mathcal H$. Also define $v_n = \|K e_n \|$, $n\in \mathbb{N}$. We claim that $\lbrace (\mathcal W_n, v_n) \rbrace_n$ forms an atomic subspace of $\mathcal H$ with respect to $K$.

To prove this first note that for every $f\in \mathcal H$, $P_{\mathcal W_n} f = \frac{\langle f, Ke_n\rangle}{v_n^2} K e_n$. Hence we have  $$\sum_{n \in \mathbb N} v_n ^2 \|P_{\mathcal W_n} f\|^2 = \sum_{n \in \mathbb N} | \langle f, K e_n \rangle |^2 = \|K^*f\|^2 \leq \|K\|^2 \|f\|^2.$$ This shows that $\lbrace (\mathcal W_n, v_n) \rbrace_n$ is a  fusion Bessel sequence.

Again for all $f \in \mathcal H$, $f= \sum_{n \in \mathbb N} \langle f,  e_n \rangle e_n $ and therefore $K f = \sum_{n \in \mathbb N} \langle f,  e_n \rangle K e_n = \sum_{n \in \mathbb N} v_nf_n$, where  $f_n=\frac {\langle f,  e_n \rangle}  {\|K e_n \|}  K e_n\in\mathcal W_n$. Thus we have

$$\| \lbrace f_n \rbrace \|_{(\sum_{n \in \mathbb N} \oplus \mathcal W_n)_{l^2}}^2 = \sum_{n \in \mathbb N} \|f_n \|^2 = \sum_{n \in \mathbb N} | \langle f,  e_n \rangle |^2 = \|f\|^2.$$
\end{proof}

The notion of atomic subspaces has revived to produce generalization of family of local atoms or atomic systems for a bounded, linear operator. The following theorem provides a characterization of atomic subspaces.

\begin{thm}\label{Thm-Atom-Kfusion}
Let $\mathcal H$ be a Hilbert space. Assume that $\lbrace \mathcal W_i \rbrace_{i \in I}$ be a family of closed subspaces of $\mathcal H$ and $\lbrace  v_i \rbrace_{i \in I}$ be a family of positive weights. Then the following statements are equivalent :
\begin{enumerate}
\item $\lbrace (\mathcal W_i ,v_i)\rbrace_{i \in I}$  is an atomic subspace of $\mathcal H$ with respect to $K$.\\
\item \label{Atom-Kfusion}There exist $A, B >0$ such that for all $f \in \mathcal H$,
$$A\|K^*f\|^2 \leq \sum_{i \in I} v_i ^2 \|P_{\mathcal W_i} f\|^2 \leq B\|f\|^2.$$
\end{enumerate}
\end{thm}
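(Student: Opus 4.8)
The plan is to prove the two implications separately, using the fusion-frame synthesis and analysis operators $T_{\mathcal W}$ and $T^*_{\mathcal W}$ together with Douglas' factorization theorem (Theorem \ref{Thm-Douglas}) as the main bridge between the operator inequality in (\ref{Atom-Kfusion}) and a range-inclusion/factorization statement.

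For the implication $(1)\Rightarrow(2)$, I would first observe that condition (a) of Definition \ref{Defn:atom} says precisely that $\{(\mathcal W_i,v_i)\}_{i\in I}$ is a fusion Bessel sequence, so the synthesis operator $T_{\mathcal W}\colon \left(\sum_{i\in I}\oplus\mathcal W_i\right)_{l^2}\to\mathcal H$ is well-defined and bounded, and the adjoint analysis operator satisfies $T^*_{\mathcal W}f=\{v_iP_{\mathcal W_i}f\}_{i\in I}$; this immediately gives the upper bound $\sum_{i\in I}v_i^2\|P_{\mathcal W_i}f\|^2=\|T^*_{\mathcal W}f\|^2\le B\|f\|^2$ with $B=\|T_{\mathcal W}\|^2$. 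For the lower bound, condition (b) says that for each $f$ there is $\{f_i\}\in\left(\sum_{i\in I}\oplus\mathcal W_i\right)_{l^2}$ with $Kf=T_{\mathcal W}\{f_i\}$ and $\|\{f_i\}\|\le C\|f\|$; this exactly says $R(K)\subseteq R(T_{\mathcal W})$ (indeed with a norm control on the preimage). By Douglas' factorization theorem applied with $S=K$ and $T=T_{\mathcal W}$, the inclusion $R(K)\subseteq R(T_{\mathcal W})$ is equivalent to $KK^*\le\alpha\,T_{\mathcal W}T^*_{\mathcal W}$ for some $\alpha>0$, i.e.\ $\|K^*f\|^2\le\alpha\sum_{i\in I}v_i^2\|P_{\mathcal W_i}f\|^2$ for all $f$, which is the desired lower bound with $A=1/\alpha$.

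For the implication $(2)\Rightarrow(1)$, the upper bound in (\ref{Atom-Kfusion}) shows that $\{(\mathcal W_i,v_i)\}_{i\in I}$ is a fusion Bessel sequence, so $T_{\mathcal W}$ and $T^*_{\mathcal W}$ are bounded as above and condition (a) holds; moreover $\sum_{i\in I}v_i^2\|P_{\mathcal W_i}f\|^2=\|T^*_{\mathcal W}f\|^2=\langle T_{\mathcal W}T^*_{\mathcal W}f,f\rangle$. The lower bound then reads $\|K^*f\|^2\le\tfrac1A\langle T_{\mathcal W}T^*_{\mathcal W}f,f\rangle$, i.e.\ $KK^*\le\tfrac1A\,T_{\mathcal W}T^*_{\mathcal W}$, and Douglas' theorem gives both $R(K)\subseteq R(T_{\mathcal W})$ and a factorization $K=T_{\mathcal W}L$ with $L\in\mathcal L(\mathcal H,(\sum_{i\in I}\oplus\mathcal W_i)_{l^2})$ bounded. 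Writing $Lf=\{f_i\}_{i\in I}$ we get $Kf=T_{\mathcal W}\{f_i\}=\sum_{i\in I}v_if_i$ with $f_i\in\mathcal W_i$ and $\|\{f_i\}\|=\|Lf\|\le\|L\|\,\|f\|$, so condition (b) holds with $C=\|L\|$. The only mildly delicate point — and the step I expect to need the most care — is the bookkeeping in the $(1)\Rightarrow(2)$ direction, namely extracting the operator inequality $KK^*\le\alpha T_{\mathcal W}T^*_{\mathcal W}$: one must be sure that the range inclusion $R(K)\subseteq R(T_{\mathcal W})$ really follows from condition (b) (it does, since (b) is a statement about every $f\in\mathcal H$, hence about every element of $R(K)$), after which Douglas' theorem does all the work; the explicit constants $A$ and $B$ are then read off directly and no further estimation is required.
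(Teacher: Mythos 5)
Your proof is correct and follows essentially the same route as the paper's: both directions hinge on identifying $\sum_{i\in I}v_i^2\|P_{\mathcal W_i}f\|^2$ with $\|T^*_{\mathcal W}f\|^2$ and letting Douglas' factorization theorem convert between the operator inequality $KK^*\le\alpha\,T_{\mathcal W}T^*_{\mathcal W}$ and the range inclusion/factorization. If anything, your justification of $R(K)\subseteq R(T_{\mathcal W})$ in the $(1)\Rightarrow(2)$ direction — reading it directly off condition (b) of the definition — is cleaner than the paper's, which appeals to surjectivity of $T_{\mathcal W}$, a property of fusion frames not yet established for a general atomic subspace at that point in the argument.
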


\begin{proof} Suppose $\lbrace (\mathcal W_i ,v_i)\rbrace_{i \in I}$  is an atomic subspace of $\mathcal H$ with respect to $K$. It is sufficient to show that there exists a constant $A >0$ such that $\sum_{i \in I} v_i ^2 \|P_{\mathcal W_i} f\|^2\geq A\|K^*f\|^2$ for all $f \in \mathcal H$. But since $\|T_\mathcal W^*f\|^2=\sum_{i \in I} v_i ^2 \|P_{\mathcal W_i} f\|^2$, where $T_\mathcal W$ is the corresponding synthesis operator, this is equivalent to show that $T_\mathcal WT_\mathcal W^*\geq A KK^*$. Now since $T_\mathcal W$ is bounded, linear, onto \cite{Ca04}, $R(T_\mathcal W) \supseteq R(K)$. Therefore by using Theorem \ref{Thm-Douglas} we get the desired result.

Conversely, suppose that the inequality in \ref{Atom-Kfusion} is true. Then the right inequality asserts that $\lbrace (\mathcal W_i ,v_i)\rbrace_{i \in I}$ is a fusion Bessel sequence. Now the left inequality gives $A K K^* \leq T_\mathcal W T_\mathcal W ^*$.  Then using Theorem \ref{Thm-Douglas}, there exists a bounded linear operator $L \in \mathcal L(\mathcal H, (\sum_{i \in I} \oplus \mathcal W_i)_{l^2})$ such that $K=T_\mathcal W L$. For every $f\in \mathcal H$, define $Lf=\lbrace f_i \rbrace_{i \in I}$. Therefore $Kf=T_\mathcal W \lbrace f_i \rbrace_{i \in I}=\sum_{i \in I} v_i f_i$ and  $\| \lbrace f_i \rbrace \|_{(\sum_{i \in I} \oplus \mathcal W_i)_{l^2}} = \|Lf \|_{(\sum_{i \in I} \oplus \mathcal W_i)_{l^2}} \leq \|L\| \|f\|$ for all $f \in \mathcal H$.
This completes the proof.
\end{proof}

\begin{cor} Let  $\lbrace (\mathcal W_i ,v_i)\rbrace_{i \in I}$ be a fusion Bessel sequence in $\mathcal H$. Then  $\lbrace (\mathcal W_i ,v_i)\rbrace_{i \in I}$ is an atomic subspace of $\mathcal H$ with respect to the corresponding fusion frame operator  $S_\mathcal W f=\sum_{i \in I} v_i ^2 P_{\mathcal W_i} (f)$, for all $f \in \mathcal H$.
\end{cor}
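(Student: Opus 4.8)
The plan is to deduce this directly from Theorem \ref{Thm-Atom-Kfusion} (equivalently, to verify Definition \ref{Defn:atom} by hand through Douglas' theorem), taking the operator $K$ in that theorem to be $S_\mathcal{W}$ itself. First I would record the structural facts already collected in the preliminaries: a fusion Bessel sequence has a well-defined bounded synthesis operator $T_\mathcal{W}$ on $(\sum_{i\in I}\oplus\mathcal{W}_i)_{l^2}$, and $S_\mathcal{W}=T_\mathcal{W}T_\mathcal{W}^*$ is bounded, positive and self-adjoint; in particular $S_\mathcal{W}^*=S_\mathcal{W}$, so the expression $\|K^*f\|^2$ appearing in Theorem \ref{Thm-Atom-Kfusion} is just $\|S_\mathcal{W}f\|^2$.

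Next I would dispose of the upper bound: condition (a) of Definition \ref{Defn:atom}, i.e. the right-hand (Bessel) inequality $\sum_{i\in I}v_i^2\|P_{\mathcal{W}_i}f\|^2\le B\|f\|^2$, is exactly the hypothesis that $\{(\mathcal{W}_i,v_i)\}_{i\in I}$ is a fusion Bessel sequence, so there is nothing to prove there. For the remaining condition the key observation is the trivial range inclusion $R(S_\mathcal{W})=R(T_\mathcal{W}T_\mathcal{W}^*)\subseteq R(T_\mathcal{W})$, since the range of a composition is contained in the range of the outer map. Applying Douglas' factorization theorem (Theorem \ref{Thm-Douglas}) with $S=S_\mathcal{W}$ and $T=T_\mathcal{W}$ then produces $L\in\mathcal{L}\big(\mathcal{H},(\sum_{i\in I}\oplus\mathcal{W}_i)_{l^2}\big)$ with $S_\mathcal{W}=T_\mathcal{W}L$. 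Writing $Lf=\{f_i\}_{i\in I}$ gives $S_\mathcal{W}f=T_\mathcal{W}\{f_i\}=\sum_{i\in I}v_if_i$ and $\|\{f_i\}\|=\|Lf\|\le\|L\|\,\|f\|$, which is condition (b) with $C=\|L\|$; this finishes the argument. If instead one prefers to stay inside Theorem \ref{Thm-Atom-Kfusion}, the lower frame-type bound can be made explicit: from positivity one has $S_\mathcal{W}^2\le\|S_\mathcal{W}\|\,S_\mathcal{W}$, hence $\|S_\mathcal{W}f\|^2=\langle S_\mathcal{W}^2f,f\rangle\le\|S_\mathcal{W}\|\,\langle S_\mathcal{W}f,f\rangle=\|S_\mathcal{W}\|\sum_{i\in I}v_i^2\|P_{\mathcal{W}_i}f\|^2$, so the left inequality holds with $A=1/\|S_\mathcal{W}\|$.

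I do not expect a genuine obstacle here; the result is essentially an instance of Theorem \ref{Thm-Atom-Kfusion}. The one point requiring care is that $S_\mathcal{W}$ need not be invertible, since $\{(\mathcal{W}_i,v_i)\}_{i\in I}$ is only assumed to be a fusion Bessel sequence and not a fusion frame, so the proof must avoid any reference to $S_\mathcal{W}^{-1}$ — which is precisely why invoking Douglas' theorem (or the inequality $S_\mathcal{W}^2\le\|S_\mathcal{W}\|S_\mathcal{W}$) rather than the reconstruction formula is the right move; the degenerate case $S_\mathcal{W}=0$ is vacuous.
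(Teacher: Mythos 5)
Your argument is correct and follows essentially the same route as the paper: both proofs reduce the claim to a range inclusion, invoke Douglas' factorization theorem (Theorem \ref{Thm-Douglas}), and conclude via Theorem \ref{Thm-Atom-Kfusion} (or, equivalently, by verifying Definition \ref{Defn:atom} directly from the factorization $S_\mathcal{W}=T_\mathcal{W}L$). The one place you genuinely diverge is in how the range inclusion is justified, and your version is the better one: the paper asserts $R(T_\mathcal{W})=\mathcal{H}=R(S_\mathcal{W})$, which holds for a fusion frame but need not hold for a mere fusion Bessel sequence, whereas you use only the trivial inclusion $R(S_\mathcal{W})=R(T_\mathcal{W}T_\mathcal{W}^*)\subseteq R(T_\mathcal{W})$, which is all that Douglas' theorem requires. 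Your alternative explicit bound $\|S_\mathcal{W}f\|^2=\|S_\mathcal{W}^{1/2}S_\mathcal{W}^{1/2}f\|^2\le\|S_\mathcal{W}\|\,\langle S_\mathcal{W}f,f\rangle$ is also valid and has the advantage of producing the concrete constant $A=1/\|S_\mathcal{W}\|$ without any appeal to Douglas' theorem; the remark that $S_\mathcal{W}$ need not be invertible, and that the case $S_\mathcal{W}=0$ is trivial, is exactly the right caution.
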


\begin{proof} Given that $\lbrace (\mathcal W_i ,v_i)\rbrace_{i \in I}$ be a fusion Bessel sequence in $\mathcal H$. Then $\sum_{i \in I} v_i ^2 \|P_{\mathcal W_i} f\|^2 \leq B\|f\|^2$ for all $f \in \mathcal H$ and for some $B>0$. Now since $R(T_\mathcal W) = \mathcal H = R(S_\mathcal W)$, by using Douglas theorem (\ref{Thm-Douglas}), we have $A S_\mathcal W S_\mathcal W ^* \leq T_\mathcal W T_\mathcal W ^*$, for some $A>0$. Hence $A \|S_\mathcal W ^* f\|^2 \leq \sum_{i \in I} v_i ^2 \|P_{\mathcal W_i} f\|^2 $ and hence the result follows from Theorem \ref{Thm-Atom-Kfusion}.
\end{proof}

Theorem \ref{Thm-Atom-Kfusion} provides a generalization of fusion frames.
\begin{defn}
Given $\mathcal H$, a collection of closed subspaces $\lbrace \mathcal W_i \rbrace_{i \in I}$  of $\mathcal H$ with a collection of positive weights $\lbrace v_i \rbrace_{i \in I}$, $\lbrace (\mathcal W_i, v_i) : i\in I\rbrace$, is said to be a $K$-fusion frame for $\mathcal H$ with respect to $K\in \mathcal L(\mathcal H)$ if there exist positive constants $A, B$ such that
\begin{equation}\label{Eq:K-Fus-frame}
A\|K^*f\|^2 \leq \sum_{i \in I} v_i ^2 \|P_{\mathcal W_i}f\|^2 \leq B\|f\|^2. \end{equation}
\end{defn}
In this context, we acknowledge that recently Liu and Li \cite{Li18} introduced the concept of $K$-fusion frames. They studied $K$-fusion frames with unitary systems' structure and introduced the concept of $K$-fusion frame generators. The present work has been done almost simultaneously with the work of Liu and Li.

Here we recall the definition of  Moore-Penrose pseudo inverse of an operator.

\begin{defn}\cite{Jo13} Let $\mathcal H$ be a Hilbert space and suppose that $T \in \mathcal L(\mathcal H)$ has closed range. Then there exists an operator $T^\dag \in \mathcal L(\mathcal H)$ for which
$$N(T^\dag)=N(T^*),~~~R(T^\dag)=R(T^*),~~~TT^\dag = P_{R(T)},~~~T^\dag T = P_{R(T^*)}.$$
$T^\dag$ is called Moore-Penrose pseudo inverse of $T$ and is uniquely determined by the above mentioned  properties. If $T$ is invertible, then $T^{-1}=T^\dag$.
\end{defn}

The following theorem provides a relation between fusion frames and $K$-fusion frames.

\begin{thm} Let $K \in \mathcal L(\mathcal H)$. Then:
 \begin{itemize}
 \item[(a)] Every fusion frame is a $K$-fusion frame.
 \item[(b)] If $R(K)$ is closed, every $K$-fusion frame is a fusion frame for $R(K)$.
 \end{itemize}
\end{thm}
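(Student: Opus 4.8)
The plan is to prove each part directly from the defining inequality (\ref{Eq:K-Fus-frame}) and the corresponding inequality (\ref{Eq:Fus-frame}) for fusion frames, using only elementary operator estimates.

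For part (a), suppose $\lbrace (\mathcal W_i, v_i) \rbrace_{i \in I}$ is a fusion frame, so there exist $0 < A \leq B < \infty$ with $A\|f\|^2 \leq \sum_{i \in I} v_i^2 \|P_{\mathcal W_i} f\|^2 \leq B\|f\|^2$ for all $f \in \mathcal H$. The right-hand (Bessel) inequality already gives the upper bound needed in (\ref{Eq:K-Fus-frame}), so it suffices to produce a lower bound of the form $A' \|K^* f\|^2 \leq \sum_{i \in I} v_i^2 \|P_{\mathcal W_i} f\|^2$. First I would observe that $\|K^* f\| \leq \|K\| \|f\|$, hence $\|K^*f\|^2 \leq \|K\|^2 \|f\|^2 \leq \frac{\|K\|^2}{A} \sum_{i \in I} v_i^2 \|P_{\mathcal W_i} f\|^2$, so $A' = A / \|K\|^2$ works (with the convention that if $K = 0$ the statement is trivial). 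Thus every fusion frame is a $K$-fusion frame with bounds $A/\|K\|^2$ and $B$.

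For part (b), assume $R(K)$ is closed and that $\lbrace (\mathcal W_i, v_i) \rbrace_{i \in I}$ is a $K$-fusion frame. The upper bound $\sum_{i \in I} v_i^2 \|P_{\mathcal W_i} f\|^2 \leq B\|f\|^2$ holds for all $f \in \mathcal H$, in particular for $f \in R(K)$, so the fusion Bessel condition is automatic on $R(K)$. For the lower bound I would exploit the Moore--Penrose pseudo-inverse: since $R(K)$ is closed, $K^\dag \in \mathcal L(\mathcal H)$ exists and satisfies $K K^\dag = P_{R(K)}$, hence $(K^\dag)^* K^* = (K K^\dag)^* = P_{R(K)}^* = P_{R(K)}$. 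For any $f \in R(K)$ we then have $f = P_{R(K)} f = (K^\dag)^* K^* f$, so $\|f\| \leq \|K^\dag\| \, \|K^* f\|$, i.e.\ $\|K^* f\|^2 \geq \|K^\dag\|^{-2} \|f\|^2$. Combining this with the left inequality of (\ref{Eq:K-Fus-frame}) gives $\sum_{i \in I} v_i^2 \|P_{\mathcal W_i} f\|^2 \geq A \|K^* f\|^2 \geq A \|K^\dag\|^{-2} \|f\|^2$ for all $f \in R(K)$. Hence $\lbrace (\mathcal W_i, v_i) \rbrace_{i \in I}$ is a fusion frame for $R(K)$ with bounds $A\|K^\dag\|^{-2}$ and $B$.

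The only subtle point — the main obstacle, though a mild one — is part (b): one must be careful that $P_{\mathcal W_i}$ denotes the orthogonal projection of $\mathcal H$ onto $\mathcal W_i$, not onto $\mathcal W_i \cap R(K)$, so the subspaces $\mathcal W_i$ need not lie inside $R(K)$; nevertheless the inequality chain above only ever evaluates the frame-type sums at vectors $f \in R(K)$, and the restriction $P_{R(K)} \circ P_{\mathcal W_i}$ issue does not arise because we never project onto $R(K)$ inside the sum. I would also flag that closedness of $R(K)$ is exactly what is needed to guarantee $K^\dag$ is bounded; without it the lower bound may fail, which is why the hypothesis appears in (b) but not in (a).
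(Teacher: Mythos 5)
Your proof is correct and follows essentially the same route as the paper: part (a) via $\|K^*f\|^2 \leq \|K\|^2\|f\|^2$ yielding the lower bound $A/\|K\|^2$, and part (b) via the Moore--Penrose identity $(K^\dag)^*K^* = P_{R(K)}$ to get $\|f\| \leq \|K^\dag\|\,\|K^*f\|$ on $R(K)$, which is exactly the paper's constant $A/\|K^{*\dag}\|^2$ since $\|K^{*\dag}\| = \|K^\dag\|$. Your write-up is in fact slightly more careful than the paper's, which states the inequality $\frac{A}{\|K^{*\dag}\|^2}\|f\|^2 \leq A\|K^*f\|^2$ without justification.
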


\begin{proof}

\begin{itemize}
\item[(a)] Let $\lbrace (\mathcal W_i, v_i)\rbrace_{i \in I}$ be a fusion frame for $\mathcal H$ with frame bounds $A, B$. Then for all $f\in \mathcal H$,
$$\frac {A} {\|K\|^2} \|K^*f\|^2 \leq  A \|f\|^2 \leq \sum_{i\in I}v_i^2\|P_{\mathcal W_i}f\|^2\leq B\|f\|^2.$$

\item[(b)] Let $\lbrace (\mathcal W_i, v_i)\rbrace_{i \in I}$ be a $K$-fusion frame for $\mathcal H$ with frame bounds $A, B$. Then for all $f\in R(K)$, $$\frac {A} {\|K^{* \dag}\|^2 } \|f\|^2 \leq  A  \|K^*f\|^2\leq\sum_{i\in I}v_i^2\|P_{\mathcal W_i}f\|^2\leq B\|f\|^2.$$
 \end{itemize}
\end{proof}

\section{Results}\label{Sec-result}

In this section we discuss properties of atomic subspaces and characterize the same.

We recall the quotient of bounded operators (see \cite{Ra16}).
\begin{defn}
Let $A, B \in \mathcal L(\mathcal H)$ with $N(B) \subset N(A)$. The quotient operator $T= [A/B]$ is a map from $R(B)$ to $R(A)$ defined by $Bx \mapsto Ax$.
\end{defn}
It may be noted that $D(T)=R(B)$, $R(T) \subset R(A)$ and $TB=A$.

Usefulness of Bessel sequence in frame theory and in general in mathematical analysis is well known. Similarly the concept of fusion Bessel sequence gives us so many spin-off results in fusion frame theory. In the following two theorems (\ref{result-thm1-NS}, \ref{result-thm2-NS}) we present necessary and sufficient conditions for fusion Bessel sequence to be $K$-fusion frame.

\begin{thm}\label{result-thm1-NS} Let $\lbrace (\mathcal W_i ,v_i)\rbrace_{i \in I}$ be a fusion Bessel sequence in $\mathcal H$ with corresponding fusion frame operator $S_\mathcal W$ and assume that $K \in \mathcal L(\mathcal H)$. Then $\lbrace (\mathcal W_i ,v_i)\rbrace_{i \in I}$ is a $K$-fusion frame if and only if the quotient operator $\left[K^*/S_\mathcal W ^{1/2}\right]$ is bounded.
\end{thm}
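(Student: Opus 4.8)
The plan is to reduce both directions of the stated equivalence to one and the same scalar estimate relating $\|K^*f\|$ and $\|S_{\mathcal W}^{1/2}f\|$, using only the definition of a $K$-fusion frame, elementary properties of the fusion frame operator, and the definition of the quotient operator.

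First I would record the identity
$\sum_{i\in I} v_i^2\|P_{\mathcal W_i}f\|^2 = \|T_{\mathcal W}^*f\|^2 = \langle S_{\mathcal W}f,f\rangle = \|S_{\mathcal W}^{1/2}f\|^2$
for every $f\in\mathcal H$, which is meaningful because $S_{\mathcal W}=T_{\mathcal W}T_{\mathcal W}^*$ is bounded, positive and self-adjoint and hence admits the positive square root $S_{\mathcal W}^{1/2}$. Since $\lbrace(\mathcal W_i,v_i)\rbrace_{i\in I}$ is a fusion Bessel sequence, the upper inequality $\sum_{i\in I} v_i^2\|P_{\mathcal W_i}f\|^2\le B\|f\|^2$ already holds, so (by the definition of a $K$-fusion frame, equivalently by Theorem \ref{Thm-Atom-Kfusion}) the collection is a $K$-fusion frame if and only if there is a constant $A>0$ with $A\|K^*f\|^2\le\|S_{\mathcal W}^{1/2}f\|^2$ for all $f\in\mathcal H$; putting $C=A^{-1/2}$, this is the same as $\|K^*f\|\le C\|S_{\mathcal W}^{1/2}f\|$ for all $f\in\mathcal H$.

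Next I would interpret this last estimate in terms of the quotient operator $T:=[K^*/S_{\mathcal W}^{1/2}]$, which by definition is the map $S_{\mathcal W}^{1/2}x\mapsto K^*x$ with domain $R(S_{\mathcal W}^{1/2})$ and $T S_{\mathcal W}^{1/2}=K^*$. The inequality $\|K^*f\|\le C\|S_{\mathcal W}^{1/2}f\|$ encodes precisely that (i) $S_{\mathcal W}^{1/2}f=0$ implies $K^*f=0$, i.e. $N(S_{\mathcal W}^{1/2})\subseteq N(K^*)$, so that $T$ is single-valued, and (ii) $\|Tu\|\le C\|u\|$ on $R(S_{\mathcal W}^{1/2})$, i.e. $T$ is bounded. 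Conversely, if $T$ is bounded with bound $C$, then for all $f\in\mathcal H$ we get $\|K^*f\|=\|T(S_{\mathcal W}^{1/2}f)\|\le C\|S_{\mathcal W}^{1/2}f\|$, which combined with the fusion Bessel bound yields $(\ref{Eq:K-Fus-frame})$ with $A=C^{-2}$ and $B$ the Bessel bound. Chaining these equivalences completes the argument.

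The proof is essentially bookkeeping; the only point deserving an explicit word is the well-definedness of $[K^*/S_{\mathcal W}^{1/2}]$. Since $N(S_{\mathcal W}^{1/2})=N(S_{\mathcal W})=N(T_{\mathcal W}^*)$ need not be contained in $N(K^*)$ in general, the hypothesis ``$[K^*/S_{\mathcal W}^{1/2}]$ is bounded'' must be read as including this inclusion; as noted above, it is in any case automatically forced in the ``only if'' direction by the lower $K$-fusion inequality. I do not anticipate any genuine obstacle beyond this observation.
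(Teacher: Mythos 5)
Your proposal is correct and follows essentially the same route as the paper: both reduce the statement to the identity $\sum_{i\in I} v_i^2\|P_{\mathcal W_i}f\|^2=\|S_{\mathcal W}^{1/2}f\|^2$ and then read the lower frame inequality as exactly the boundedness (and well-definedness) of $\bigl[K^*/S_{\mathcal W}^{1/2}\bigr]$ on $R(S_{\mathcal W}^{1/2})$. Your explicit remark that the hypothesis must be read as including the inclusion $N(S_{\mathcal W}^{1/2})\subseteq N(K^*)$ is a welcome clarification of a point the paper only addresses in the forward direction.
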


\begin{proof} Let  $\lbrace (\mathcal W_i ,v_i)\rbrace_{i \in I}$ be a $K$-fusion frame. Then there is a constant $A_1 >0$ such that
\begin{equation}\label{eqn:result_Kfusion}
A_1\|K^*f\|^2 \leq \sum_{i \in I} v_i ^2 \|P_{\mathcal W_i} f\|^2 =\langle S_\mathcal W f, f \rangle = \|S_\mathcal W ^{1/2} f \|^2,
\end{equation}
for all $f \in \mathcal H$. Now let us denote the quotient operator $\left[K^* /S_\mathcal W ^{1/2}\right]$ by $T$. Then $T: R(S_\mathcal W ^{1/2}) \rightarrow R(K^*)$ such that $T(S_\mathcal W ^{1/2} f) = K^* f$ for all $f \in \mathcal H$. From \ref{eqn:result_Kfusion}, it is clear that $N(S_\mathcal W ^{1/2}) \subset N(K^*)$ and thus $T$ is well defined. Also $\|T(S_\mathcal W ^{1/2} f)\| = \|K^* f\| \leq \frac {1}{\sqrt{A_1}} \|S_\mathcal W ^{1/2} f \|$ for all $f \in \mathcal H$ and hence $T$ is bounded.

Conversely, suppose that the quotient operator $[K^* / S_\mathcal W ^{1/2}]$ is bounded. Then there exists a constant $A_2 >0$ such that $\|K^*f\|^2 \leq A_2 \|S_\mathcal W ^{1/2} f \|^2 = A_2\langle S_\mathcal W f, f\rangle  = A_2 \sum_{i \in I} v_i ^2 \|P_{\mathcal W_i} f \|^2$ for all $f \in \mathcal H$ and consequently  $\lbrace (\mathcal W_i ,v_i)\rbrace_{i \in I}$ forms a $K$-fusion frame for $\mathcal H$.
\end{proof}

\begin{cor} Let $\mathcal H$ be a Hilbert space. Let $\lbrace (\mathcal W_i ,v_i)\rbrace_{i \in I}$ be a fusion Bessel sequence in $\mathcal H$ with the fusion frame operator $\mathcal S_\mathcal W$. Then $\lbrace (\mathcal W_i ,v_i)\rbrace_{i \in I}$ is a fusion frame if and only if $\mathcal S_\mathcal W$ is invertible and positive.
\end{cor}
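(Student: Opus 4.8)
The plan is to apply Theorem~\ref{result-thm1-NS} with $K$ replaced by the identity operator $I$, since a fusion frame is precisely an $I$-fusion frame. Under that specialization, $K^* = I$, so the quotient operator $\left[K^*/S_\mathcal W^{1/2}\right]$ becomes $\left[I/S_\mathcal W^{1/2}\right]$, which is a densely defined map $S_\mathcal W^{1/2} f \mapsto f$ on $R(S_\mathcal W^{1/2})$. The strategy is therefore to show that $\left[I/S_\mathcal W^{1/2}\right]$ is bounded if and only if $S_\mathcal W$ is invertible and positive, and then quote Theorem~\ref{result-thm1-NS}.

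First I would observe that $S_\mathcal W$ is always bounded, positive (in the sense of $\langle S_\mathcal W f, f\rangle \geq 0$) and self-adjoint for any fusion Bessel sequence, so the content of the corollary is really the equivalence ``$K$-fusion frame with $K=I$'' $\iff$ ``$S_\mathcal W$ is boundedly invertible.'' For the forward direction, if $\left[I/S_\mathcal W^{1/2}\right]$ is bounded there is $A_2>0$ with $\|f\|^2 \leq A_2\|S_\mathcal W^{1/2}f\|^2 = A_2\langle S_\mathcal W f,f\rangle$ for all $f$; combined with the Bessel bound $\langle S_\mathcal W f,f\rangle \leq B\|f\|^2$ this shows $S_\mathcal W^{1/2}$ is bounded below and hence injective with closed range, and being self-adjoint it is surjective, so $S_\mathcal W^{1/2}$ — and therefore $S_\mathcal W$ — is invertible; positivity is immediate. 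For the converse, if $S_\mathcal W$ is positive and invertible then so is $S_\mathcal W^{1/2}$, and $\|f\| = \|S_\mathcal W^{-1/2} S_\mathcal W^{1/2} f\| \leq \|S_\mathcal W^{-1/2}\|\,\|S_\mathcal W^{1/2} f\|$, which is exactly the boundedness of the quotient operator $\left[I/S_\mathcal W^{1/2}\right]$; invoking Theorem~\ref{result-thm1-NS} with $K=I$ then yields that $\{(\mathcal W_i,v_i)\}$ is a fusion frame.

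The only mild subtlety — and the place I would be most careful — is the interplay between the quotient-operator formalism and the condition $N(S_\mathcal W^{1/2}) \subset N(I) = \{0\}$ needed for $\left[I/S_\mathcal W^{1/2}\right]$ to be well defined as an operator at all; one should note that if this quotient operator exists and is bounded then automatically $S_\mathcal W^{1/2}$ is injective, so the well-definedness is not an extra hypothesis but part of the hypothesis ``the quotient operator is bounded.'' Beyond that, everything reduces to the standard fact that a positive self-adjoint operator is boundedly invertible precisely when it is bounded below, together with the already-proved Theorem~\ref{result-thm1-NS}, so there is no real obstacle; the proof is essentially a specialization.
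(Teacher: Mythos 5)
Your proposal is correct and takes essentially the same route as the paper: the paper likewise treats the forward direction as immediate from $\langle S_\mathcal W f,f\rangle=\sum_{i\in I}v_i^2\|P_{\mathcal W_i}f\|^2$ (the lower frame bound says exactly that the positive self-adjoint operator $S_\mathcal W$ is bounded below, hence invertible) and obtains the converse by applying Theorem~\ref{result-thm1-NS} with $K=I$. Your extra details---why bounded-belowness of $S_\mathcal W^{1/2}$ forces surjectivity, and why invertibility of $S_\mathcal W^{1/2}$ makes the quotient operator $\left[I/S_\mathcal W^{1/2}\right]$ bounded---merely fill in steps the paper leaves implicit.
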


\begin{proof} One direction is obvious from the definition and the fact that $\langle S_{\mathcal{W}}f, f\rangle = \sum_{i \in I} v_i ^2 \|P_{\mathcal W_i} f\|^2$.

Conversely, let us assume that $\mathcal S_\mathcal W $ is invertible and positive.  Then the result follows from Theorem \ref{result-thm1-NS} with $K=I$.
\end{proof}

\begin{thm}\label{result-thm2-NS}
Let $\lbrace (\mathcal W_i ,v_i)\rbrace_{i \in I}$ be a fusion Bessel sequence in $\mathcal H$ with fusion frame operator $S_\mathcal W$ and $K\in \mathcal L(\mathcal H)$, then $\lbrace (\mathcal W_i ,v_i)\rbrace_{i \in I}$ is a $K$-fusion frame for $\mathcal H$ if and only if there exists a positive constant $A$ such that $S_\mathcal{W}\geq AK K^*$.
\end{thm}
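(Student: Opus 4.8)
The plan is to prove Theorem \ref{result-thm2-NS} by reducing it, via spectral calculus, to the quotient-operator criterion already established in Theorem \ref{result-thm1-NS}. The key observation is that the condition $S_\mathcal{W} \geq A K K^*$ is just an operator-inequality reformulation of the lower $K$-fusion frame bound, because $\langle S_\mathcal{W} f, f\rangle = \sum_{i\in I} v_i^2 \|P_{\mathcal W_i} f\|^2$ and $\langle K K^* f, f\rangle = \|K^* f\|^2$ for every $f \in \mathcal H$.

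First I would prove the forward direction. Suppose $\{(\mathcal W_i, v_i)\}_{i\in I}$ is a $K$-fusion frame with lower bound $A$. Then for all $f \in \mathcal H$ we have $A\|K^* f\|^2 \leq \sum_{i\in I} v_i^2 \|P_{\mathcal W_i} f\|^2 = \langle S_\mathcal{W} f, f\rangle$, which is precisely $A\langle K K^* f, f\rangle \leq \langle S_\mathcal{W} f, f\rangle$, i.e.\ $S_\mathcal{W} \geq A K K^*$. Conversely, assume $S_\mathcal{W} \geq A K K^*$ for some $A>0$. Writing this out gives $A\|K^* f\|^2 \leq \langle S_\mathcal{W} f, f\rangle = \|S_\mathcal{W}^{1/2} f\|^2$ for all $f$, where I use that $S_\mathcal{W}$ is positive and self-adjoint (being a fusion frame operator of a fusion Bessel sequence, as recorded in Section \ref{Sec-Preli}) so that $S_\mathcal{W}^{1/2}$ exists and $\langle S_\mathcal{W} f, f\rangle = \|S_\mathcal{W}^{1/2} f\|^2$. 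Combining this with the fusion Bessel bound $\sum_{i\in I} v_i^2 \|P_{\mathcal W_i} f\|^2 \leq B\|f\|^2$ already in hand shows directly that $\{(\mathcal W_i, v_i)\}_{i\in I}$ satisfies inequality \eqref{Eq:K-Fus-frame}, hence is a $K$-fusion frame.

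Alternatively, to make the link with Theorem \ref{result-thm1-NS} explicit, from $A\|K^* f\|^2 \leq \|S_\mathcal{W}^{1/2} f\|^2$ one reads off that $N(S_\mathcal{W}^{1/2}) \subseteq N(K^*)$, so the quotient operator $[K^*/S_\mathcal{W}^{1/2}]$ is well defined, and the same inequality gives $\|[K^*/S_\mathcal{W}^{1/2}](S_\mathcal{W}^{1/2} f)\| = \|K^* f\| \leq A^{-1/2}\|S_\mathcal{W}^{1/2} f\|$, so the quotient operator is bounded; Theorem \ref{result-thm1-NS} then yields the conclusion, and the forward direction runs the same chain of equivalences backward. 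Either route works; I would likely present the self-contained computation since it is shorter.

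There is essentially no serious obstacle here: the proof is a sequence of rewritings of one inequality using the identities $\langle S_\mathcal{W} f, f\rangle = \sum_i v_i^2\|P_{\mathcal W_i}f\|^2 = \|S_\mathcal{W}^{1/2}f\|^2$ and $\|K^*f\|^2 = \langle KK^*f, f\rangle$. The only point requiring a word of care is that one must invoke the fusion Bessel hypothesis to supply the upper bound $B\|f\|^2$ in the converse direction — the operator inequality $S_\mathcal{W}\geq AKK^*$ alone delivers only the lower bound — but this is given in the statement, so it is immediate.
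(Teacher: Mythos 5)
Your proof is correct and takes essentially the same route as the paper, which simply remarks that the result ``follows from the fact that $\langle S_{\mathcal W}f, f\rangle = \sum_{i\in I}v_i^2\|P_{\mathcal W_i}f\|^2$'' and defers details to a reference; your self-contained computation is exactly the rewriting the paper has in mind, including the correct observation that the Bessel hypothesis must supply the upper bound in the converse direction.
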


\begin{proof}
The proof follows from the fact that $\langle S_\mathcal Wf, f\rangle = \sum_{i\in I}v_i^2\|P_{\mathcal{W}_i}f\|^2$. See \cite{Xi13} for details.
\end{proof}

Here we present a necessary and sufficient condition for a family of closed subspaces to be a $K$-fusion frame.

\begin{thm}\label{result-thm3-NS}
Let $\mathcal H$ be a Hilbert space and $K\in \mathcal L(\mathcal H)$. Assume that $\lbrace \mathcal W_i \rbrace_{i \in I}$ be a family of closed subspaces of $\mathcal H$ and $\lbrace  v_i \rbrace_{i \in I}$ be a family of positive weights. Then $\lbrace (\mathcal W_i ,v_i)\rbrace_{i \in I}$ is a $K$-fusion frame for $\mathcal H$ if and only if there exists a bounded, linear operator $L:(\sum_{i \in I} \oplus \mathcal W_i)_{l^2} \rightarrow \mathcal H$ such that $Le_n=\sum_{i \in I} v_i e_n ^i$ and $R(K) \subseteq R(L)$, where $ \lbrace e_n \rbrace_{n=1}^\infty$ is an orthonormal basis in $(\sum_{i \in I} \oplus \mathcal W_i)_{l^2}$ and $e_n ^i$ is the $i$-th component of $e_n$.
\end{thm}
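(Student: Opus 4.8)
The statement is a standard "synthesis operator" characterization, directly analogous to Theorem \ref{result-thm3-NS}'s role for $K$-frames, so I would prove it by connecting the abstract operator $L$ in the statement with the fusion-frame synthesis operator $T_{\mathcal W}$ and then invoking Douglas' factorization theorem (Theorem \ref{Thm-Douglas}) exactly as in the proof of Theorem \ref{Thm-Atom-Kfusion}. The key observation is that the condition ``$Le_n = \sum_{i\in I} v_i e_n^i$ for all $n$'' forces $L$ to coincide with the synthesis operator: for $f = \{f_i\}_{i\in I} = \sum_n \langle f, e_n\rangle e_n$ in $(\sum_{i\in I}\oplus\mathcal W_i)_{l^2}$, linearity and boundedness give $Lf = \sum_n \langle f,e_n\rangle Le_n = \sum_n \langle f,e_n\rangle \sum_i v_i e_n^i = \sum_i v_i f_i = T_{\mathcal W}f$. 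Hence the hypothesis ``$\exists$ a bounded linear $L$ with $Le_n = \sum_i v_i e_n^i$'' is equivalent to ``the synthesis operator $T_{\mathcal W}$ is bounded,'' i.e. to $\{(\mathcal W_i,v_i)\}$ being a fusion Bessel sequence, and the extra condition $R(K)\subseteq R(L)$ becomes $R(K)\subseteq R(T_{\mathcal W})$.

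\medskip

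\noindent\textbf{Forward direction.} Assume $\{(\mathcal W_i,v_i)\}_{i\in I}$ is a $K$-fusion frame. The right-hand inequality in \eqref{Eq:K-Fus-frame} says it is a fusion Bessel sequence, so the synthesis operator $T_{\mathcal W}\colon (\sum_{i\in I}\oplus\mathcal W_i)_{l^2}\to\mathcal H$ is bounded and linear; take $L = T_{\mathcal W}$, which satisfies $Le_n = T_{\mathcal W}e_n = \sum_{i\in I} v_i e_n^i$ by definition of $T_{\mathcal W}$. It remains to check $R(K)\subseteq R(L)$. The left-hand inequality $A\|K^*f\|^2 \le \sum_{i\in I} v_i^2\|P_{\mathcal W_i}f\|^2 = \|T_{\mathcal W}^*f\|^2$ gives $A\,KK^* \le T_{\mathcal W}T_{\mathcal W}^*$, and Theorem \ref{Thm-Douglas} (the equivalence of (1) and (2)) then yields $R(K)\subseteq R(T_{\mathcal W}) = R(L)$.

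\medskip

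\noindent\textbf{Converse.} Suppose such a bounded linear $L$ exists with $Le_n = \sum_{i\in I} v_i e_n^i$ and $R(K)\subseteq R(L)$. As noted above, boundedness of $L$ together with $Le_n = \sum_i v_i e_n^i$ identifies $L$ with the synthesis operator $T_{\mathcal W}$, so $\sum_{i\in I} v_i^2\|P_{\mathcal W_i}f\|^2 = \|L^*f\|^2 \le \|L\|^2\|f\|^2$ for all $f\in\mathcal H$, giving the Bessel bound $B = \|L\|^2$. For the lower bound, $R(K)\subseteq R(L)$ together with Theorem \ref{Thm-Douglas} (equivalence of (1) and (2)) gives $KK^* \le \alpha\, LL^*$ for some $\alpha>0$, i.e. $\tfrac{1}{\alpha}\|K^*f\|^2 \le \|L^*f\|^2 = \sum_{i\in I} v_i^2\|P_{\mathcal W_i}f\|^2$, which is the left inequality of \eqref{Eq:K-Fus-frame} with $A = 1/\alpha$. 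Hence $\{(\mathcal W_i,v_i)\}_{i\in I}$ is a $K$-fusion frame.

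\medskip

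\noindent\textbf{Main obstacle.} There is no deep obstacle; the only point requiring care is the identification $L = T_{\mathcal W}$ — one must verify that the prescription of $L$ on the orthonormal basis $\{e_n\}$, together with boundedness, genuinely pins down $L$ on all of $(\sum_{i\in I}\oplus\mathcal W_i)_{l^2}$ and that the resulting operator is exactly the fusion synthesis operator (in particular that $\sum_i v_i e_n^i$ indeed equals $T_{\mathcal W}e_n$ componentwise and that the interchange of the two sums $\sum_n$ and $\sum_i$ is justified by boundedness/convergence in the $l^2$-sum of subspaces). Once that bookkeeping is in place, both directions are immediate applications of Douglas' theorem, mirroring Theorem \ref{Thm-Atom-Kfusion}.
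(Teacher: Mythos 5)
Your proposal is correct and follows essentially the same route as the paper: both identify $L$ (equivalently $L^*f=\{v_iP_{\mathcal W_i}f\}_{i\in I}$) with the fusion synthesis/analysis operator and then apply Douglas' factorization theorem in each direction to pass between $R(K)\subseteq R(L)$ and the lower frame inequality. The only cosmetic difference is that the paper works with the adjoint $L^*$ and checks $Le_n=\sum_{i\in I}v_ie_n^i$ by a duality computation, whereas you build $L=T_{\mathcal W}$ directly from its values on the basis.
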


\begin{proof} Let $\lbrace (\mathcal W_i ,v_i)\rbrace_{i \in I}$ be a $K$-fusion frame. Then there exist positive constants $A$ and $B$ such that  \begin{equation*}A\|K^*f\|^2 \leq \sum_{i \in I} v_i ^2 \|P_{\mathcal W_i} f\|^2 \leq B\|f\|^2,\end{equation*} for all $f \in \mathcal H$. Define $L^* : \mathcal H \rightarrow (\sum_{i \in I} \oplus \mathcal W_i)_{l^2}$ such that $L^*f = \lbrace v_i P_{\mathcal W_i} f \rbrace_{i \in I}$. Then $\|L^*f\|^2 = \sum_{i \in I} v_i ^2 \|P_{\mathcal W_i} f\|^2$ for all $f \in \mathcal H$. Hence by the previous inequality we have $A\|K^*f\|^2 \leq \|L^*f\|^2$ for all $f \in \mathcal H$ and therefore $AKK^* \leq LL^*$. Therefore by Theorem \ref{Thm-Douglas}, $R(K) \subset R(L)$. Now $ \langle f, Le_n \rangle_ \mathcal H=\langle L^*f,e_n \rangle_{(\sum_{i \in I} \oplus \mathcal W_i)_{l^2}}=\langle \lbrace v_i P_{\mathcal W_i} f \rbrace , e_n \rangle_{(\sum_{i \in I} \oplus \mathcal W_i)_{l^2}}=\sum_{i \in I} \langle v_i P_{\mathcal W_i} f , e_n ^i \rangle_\mathcal H=\sum_{i \in I} \langle f, v_i e_n ^i \rangle_\mathcal H$. Hence $Le_n = \sum_{i \in I} v_i e_n ^i$.

Conversely, suppose $L: (\sum_{i \in I} \oplus \mathcal W_i)_{l^2} \rightarrow \mathcal H$ such that $Le_n = \sum_{i \in I} v_i e_n ^i$ and $R(K) \subset R(L)$. Then $L^*f = \lbrace v_i P_{\mathcal W_i} f \rbrace_{i \in I}$. Therefore $\sum_{i \in I} v_i ^2 \|P_{\mathcal W_i} f\|^2=\|L^*f\|^2 \leq \|L^*\|^2 \|f\|^2$. Hence  $\lbrace (\mathcal W_i ,v_i)\rbrace_{i \in I}$ form a fusion Bessel sequence. Now since $R(K) \subseteq R(L)$, again by Theorem \ref{Thm-Douglas}, there exists a positive constant $A$ such that $AKK^* \leq LL^*$ and hence $A\|K^*f\|^2 \leq  \|L^*f\|^2 = \sum_{i \in I} v_i ^2 \|P_{\mathcal W_i} f\|^2$ . Consequently $\lbrace (\mathcal W_i ,v_i)\rbrace_{i \in I}$ is a $K$-fusion frame for $\mathcal H$.
\end{proof}

Following two results show methods of construction of $K$-fusion frames from $K$-frames. Analogous results for fusion frames are discussed in \cite{Ca04}.

\begin{thm}\label{resullt-thm4-kfr-kfufr}
 Let $\mathcal H$ be a Hilbert space, $K\in \mathcal L(\mathcal H)$ and $\lbrace f_j \rbrace_{j \in J}$ be a $K$-frame for $\mathcal H$ with frame bounds $A$ and $B$. Assume that $\lbrace J_i \rbrace_{i \in I}$  is a partition of the index set $J$ and $\mathcal W_i$ is the closed linear span of $\lbrace f_j \rbrace_{j \in J_i}$ for all $i \in I$. Then for all $f \in \mathcal H$ we have $\frac {A} {B} \|K^*f\|^2 \leq \sum_{i \in I} \|P_{\mathcal W_i} f\|^2$. Further if $|I| < \infty$ then $\lbrace \mathcal W_i \rbrace_{i \in I}$ is an $1$-uniform $K$-fusion frame for $\mathcal H$.
\end{thm}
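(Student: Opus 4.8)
The plan is to establish the lower estimate first, directly from the $K$-frame inequality together with the elementary observation that $f_j \in \mathcal W_i$ whenever $j \in J_i$. First I would fix $f \in \mathcal H$ and $i \in I$; since every $f_j$ with $j \in J_i$ lies in $\mathcal W_i$, the defining property of the orthogonal projection gives $\langle f, f_j\rangle = \langle P_{\mathcal W_i}f, f_j\rangle$ for all $j \in J_i$. Applying the upper (that is, $B$-Bessel) half of the $K$-frame inequality to the vector $P_{\mathcal W_i}f$ then yields $\sum_{j \in J_i}|\langle f, f_j\rangle|^2 = \sum_{j \in J_i}|\langle P_{\mathcal W_i}f, f_j\rangle|^2 \le B\,\|P_{\mathcal W_i}f\|^2$.

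Next I would sum this inequality over $i \in I$. Because $\{J_i\}_{i \in I}$ is a partition of $J$, the left-hand sides add up to $\sum_{j \in J}|\langle f, f_j\rangle|^2$, which is bounded below by $A\,\|K^*f\|^2$ by the lower half of the $K$-frame inequality. Hence $A\,\|K^*f\|^2 \le \sum_{j \in J}|\langle f, f_j\rangle|^2 \le B\sum_{i \in I}\|P_{\mathcal W_i}f\|^2$, i.e. $\frac{A}{B}\|K^*f\|^2 \le \sum_{i \in I}\|P_{\mathcal W_i}f\|^2$ for all $f \in \mathcal H$, which is the asserted inequality; note that it simultaneously exhibits $\frac{A}{B}$ as a lower $K$-fusion-frame bound for $\{(\mathcal W_i,1)\}_{i\in I}$.

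For the second assertion, when $|I| < \infty$ the fusion-Bessel (upper) bound is immediate: each orthogonal projection is a contraction, so $\|P_{\mathcal W_i}f\| \le \|f\|$ and therefore $\sum_{i \in I}\|P_{\mathcal W_i}f\|^2 \le |I|\,\|f\|^2$. Combining this with the bound from the first part shows that $\{(\mathcal W_i, 1)\}_{i \in I}$ satisfies $\frac{A}{B}\|K^*f\|^2 \le \sum_{i \in I}\|P_{\mathcal W_i}f\|^2 \le |I|\,\|f\|^2$ for every $f \in \mathcal H$, so it is a $K$-fusion frame, and since all weights equal $1$ it is $1$-uniform.

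The argument is a partitioning-and-summation trick with no deep obstacle, but the step worth flagging is the upper bound in the second part: finiteness of $I$ is genuinely needed there. The lower estimate $\frac{A}{B}\|K^*f\|^2 \le \sum_{i}\|P_{\mathcal W_i}f\|^2$ holds for an arbitrary partition, yet without $|I| < \infty$ the subspaces $\mathcal W_i$ can be large and overlapping (for instance, each $\{f_j\}_{j\in J_i}$ could span a fixed nontrivial subspace), making $\sum_{i}\|P_{\mathcal W_i}f\|^2$ divergent; in particular the Bessel bound of $\{f_j\}$ does not transfer to a fusion-Bessel bound, and the contraction estimate under the hypothesis $|I|<\infty$ is the natural substitute.
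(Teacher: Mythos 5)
Your proof is correct and follows essentially the same route as the paper: both use the identity $\langle f, f_j\rangle = \langle P_{\mathcal W_i}f, f_j\rangle$ for $j\in J_i$, apply the Bessel bound $B$ to each subcollection evaluated at $P_{\mathcal W_i}f$, sum over the partition, and invoke the lower $K$-frame bound, with the contraction estimate $\sum_{i\in I}\|P_{\mathcal W_i}f\|^2\le |I|\,\|f\|^2$ handling the finite case. Your closing remark on why $|I|<\infty$ is genuinely needed for the upper bound is a sensible addition not present in the paper.
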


\begin{proof} Since $\lbrace f_j \rbrace_{j \in J}$ is a $K$-frame for $\mathcal H$  with  bounds $A$ and $B$, we have $$A\|K^*f\|^2\leq \sum_{j\in J} |\langle f, f_j\rangle|^2 = \sum_{i\in I} \sum_{j\in J_i} |\langle f, f_j\rangle|^2 = \sum_{i \in I} \sum_{j \in J_i} | \langle P_{\mathcal W_i} f,f_j \rangle |^2\leq B\|f\|^2,$$ for all $f\in \mathcal H$. Now since every sub-collection of a Bessel sequence is also a Bessel, we have $\sum_{i \in I} \sum_{j \in J_i} | \langle P_{\mathcal W_i} f,f_j \rangle |^2 \leq \sum_{i \in I} B\|P_{\mathcal W_i} f\|^2 $. Hence we have $\frac {A} {B} \|K^*f\|^2 \leq \sum_{i \in I} \|P_{\mathcal W_i} f\|^2$, for all $f\in \mathcal H$.

Further, if $|I| < \infty$ then we have $\sum_{i \in I} \|P_{\mathcal W_i} f\|^2 \leq |I| \|f\|^2$. Hence in this special case $\lbrace \mathcal W_i \rbrace_{i \in I}$ is always an $1$-uniform $K$-fusion frame for $\mathcal H$.
\end{proof}

\begin{cor}\label{resullt-cor-kfr-kfufr}
 Let $\mathcal H$ be a Hilbert space and $K\in \mathcal L(\mathcal H)$. Suppose $\lbrace f_j \rbrace_{j \in J}$ is a $K$-frame for $\mathcal H$. Assume that $J=J_1 \cup J_2 \cup .... \cup J_n$ be a finite partition of $J$ and $\mathcal W_i = \overline{span_{j \in J_i} \lbrace f_j} \rbrace$. Then  $\lbrace (\mathcal W_i ,v_i)\rbrace_{i=1}^n$ forms a $K$-fusion frame for $\mathcal H$ for any collection of positive weights $\lbrace v_i \rbrace_{i=1}^n$.
\end{cor}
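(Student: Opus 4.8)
The plan is to deduce Corollary~\ref{resullt-cor-kfr-kfufr} directly from Theorem~\ref{resullt-thm4-kfr-kfufr}, noting that the only extra content is the presence of arbitrary positive weights $\{v_i\}_{i=1}^n$ in place of the unit weights. First I would invoke Theorem~\ref{resullt-thm4-kfr-kfufr}: since $\{f_j\}_{j\in J}$ is a $K$-frame with bounds $A,B$ and $J=J_1\cup\dots\cup J_n$ is a finite partition with $\mathcal W_i=\overline{\operatorname{span}}_{j\in J_i}\{f_j\}$, we obtain for all $f\in\mathcal H$ the lower estimate $\frac{A}{B}\|K^*f\|^2\leq\sum_{i=1}^n\|P_{\mathcal W_i}f\|^2$, and since $|I|=n<\infty$ also the upper estimate $\sum_{i=1}^n\|P_{\mathcal W_i}f\|^2\leq n\|f\|^2$.

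Next I would introduce the weights. Put $m=\min_{1\le i\le n}v_i>0$ and $M=\max_{1\le i\le n}v_i<\infty$ (both finite and positive because the index set is finite and the weights are positive). Then for every $f\in\mathcal H$,
\begin{equation*}
m^2\sum_{i=1}^n\|P_{\mathcal W_i}f\|^2\ \leq\ \sum_{i=1}^n v_i^2\|P_{\mathcal W_i}f\|^2\ \leq\ M^2\sum_{i=1}^n\|P_{\mathcal W_i}f\|^2.
\end{equation*}
Combining this with the two inequalities from Theorem~\ref{resullt-thm4-kfr-kfufr} yields
\begin{equation*}
\frac{A m^2}{B}\|K^*f\|^2\ \leq\ m^2\sum_{i=1}^n\|P_{\mathcal W_i}f\|^2\ \leq\ \sum_{i=1}^n v_i^2\|P_{\mathcal W_i}f\|^2\ \leq\ M^2\sum_{i=1}^n\|P_{\mathcal W_i}f\|^2\ \leq\ n M^2\|f\|^2,
\end{equation*}
for all $f\in\mathcal H$. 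Hence $\{(\mathcal W_i,v_i)\}_{i=1}^n$ satisfies \eqref{Eq:K-Fus-frame} with constants $A'=Am^2/B$ and $B'=nM^2$, so it is a $K$-fusion frame for $\mathcal H$.

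There is essentially no obstacle here: the entire argument is a two-sided weight comparison on a finite sum bolted onto the already-proved Theorem~\ref{resullt-thm4-kfr-kfufr}. The only point requiring a word of care is that finiteness of the partition is used twice — once to guarantee the Bessel-type upper bound $\sum_i\|P_{\mathcal W_i}f\|^2\le n\|f\|^2$ (as in the theorem), and once to ensure $m>0$ and $M<\infty$ so that the weighted and unweighted sums are comparable; for an infinite partition with unbounded or vanishing weights the conclusion could fail, which is why the statement restricts to a finite partition.
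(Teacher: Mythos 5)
Your proposal is correct and follows essentially the same route as the paper: invoke Theorem~\ref{resullt-thm4-kfr-kfufr} to get the $1$-uniform bounds $A/B$ and $n$, then sandwich the weighted sum between $m^2$ and $M^2$ times the unweighted sum (the paper calls these $v=\min_i v_i$ and $w=\max_i v_i$), arriving at the same constants $Am^2/B$ and $nM^2$.
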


\begin{proof} Let $\lbrace f_j \rbrace_{j \in J}$ be a $K$-frame for $\mathcal H$ with frame bounds $A$ and $B$. Then using Theorem \ref{resullt-thm4-kfr-kfufr}, $\{W_i\}$ forms an $1$-uniform $K$-fusion frame with frame bounds $A/B$ and $n$. That is
$$\frac {A} {B} \|K^*f\|^2 \leq \sum_{i=1}^n \|P_{\mathcal W_i} f\|^2\leq n \|f\|^2,$$ for all $f\in \mathcal H$. Now considering $v=\min\{v_i:~i=1, 2, \cdots, n\}$ and $w=\max\{v_i:~i=1, 2, \cdots, n\}$, we have
$$\frac {Av^2} {B} \|K^*f\|^2 \leq \sum_{i=1}^n v_i^2\|P_{\mathcal W_i} f\|^2\leq nw^2 \|f\|^2,$$ for all $f\in \mathcal H$. Hence proved.

\end{proof}

\begin{defn}
Let $\lbrace \mathcal H_i \rbrace_{i \in I}$ be a non-overlapping family of Hilbert spaces. For each $i \in I$, let us assume that $T_i : \mathcal H_i \rightarrow \mathcal H_i$ be a bounded, linear operator on $\mathcal H_i$ such that the family $\lbrace T_i \rbrace_{i \in I}$ is uniformly bounded i.e. $sup \lbrace \|T_i\| : i \in I \rbrace < \infty$. Then the direct sum operator of the uniformly bounded family $\lbrace T_i \rbrace_{i \in I}$ is the operator $ \bigoplus_{i \in I} T_i : \bigoplus_{i \in I} \mathcal H_i \rightarrow \bigoplus_{i \in I} \mathcal H_i$ on the direct sum of the Hilbert spaces $\bigoplus_{i \in I} \mathcal H_i$ is defined as $(\bigoplus_{i \in I} T_i) (x)= \sum_{i \in I} T_i x_i$, where $x= \sum_{i \in I}  x_i$ and $x_i \in \mathcal H_i$.
\end{defn}
It is easy to check that $ \bigoplus_{i \in I} T_i$ is well defined, bounded, linear operator, whose norm is given by $\|\bigoplus_{i \in I} T_i \| = sup \lbrace \|T_i \| : i \in I \rbrace$.

In the following theorem we will show that direct sum of $K$-fusion frames is a $K$-fusion frame.

\begin{thm} Let $\{(\mathcal W_{ij}, v_i)\}_{i \in I}$ be a collection of $K_j$-fusion frames for $\mathcal H_j$,  $j=1, 2, \cdots, m$ with $\mathcal W_{ij} \cap \mathcal W_{ik} = \phi$ for $j \neq k$. Then $\left\{\bigoplus _{j=1}^m \mathcal W_{ij}, v_i\right\}_{i\in I}$ is a $\bigoplus_{j=1}^m K_j $-fusion frame for the Hilbert space $\bigoplus_{j=1}^m H_j$.
\end{thm}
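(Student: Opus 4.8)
The plan is to push everything through the componentwise structure of the orthogonal direct sum. Write $\mathcal H = \bigoplus_{j=1}^m \mathcal H_j$ and $K = \bigoplus_{j=1}^m K_j$; since $m$ is finite the family $\lbrace K_j \rbrace$ is trivially uniformly bounded, so $K \in \mathcal L(\mathcal H)$ is well defined, and a direct computation on inner products gives $K^* = \bigoplus_{j=1}^m K_j^*$. A generic element of $\mathcal H$ is $f = (f_1,\dots,f_m)$ with $f_j \in \mathcal H_j$, and $\|f\|^2 = \sum_{j=1}^m \|f_j\|^2$, $\|K^*f\|^2 = \sum_{j=1}^m \|K_j^* f_j\|^2$.

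First I would verify the structural facts about the subspaces. Set $\mathcal W_i := \bigoplus_{j=1}^m \mathcal W_{ij}$; this is a finite orthogonal sum of closed subspaces sitting in the mutually orthogonal summands $\mathcal H_j$, hence closed in $\mathcal H$. Because the $\mathcal H_j$ are non-overlapping, the orthogonal projection onto $\mathcal W_i$ splits as $P_{\mathcal W_i} f = (P_{\mathcal W_{i1}} f_1, \dots, P_{\mathcal W_{im}} f_m)$, and therefore $\|P_{\mathcal W_i} f\|^2 = \sum_{j=1}^m \|P_{\mathcal W_{ij}} f_j\|^2$ for every $f \in \mathcal H$. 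This is the one place the hypothesis $\mathcal W_{ij} \cap \mathcal W_{ik} = \phi$ (i.e. the orthogonality of the ambient components) is genuinely used.

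Next, let $A_j, B_j > 0$ be $K_j$-fusion frame bounds for $\lbrace (\mathcal W_{ij}, v_i)\rbrace_{i\in I}$, so that $A_j \|K_j^* f_j\|^2 \le \sum_{i\in I} v_i^2 \|P_{\mathcal W_{ij}} f_j\|^2 \le B_j \|f_j\|^2$ for all $f_j \in \mathcal H_j$. Put $A = \min_{1\le j\le m} A_j$ and $B = \max_{1\le j\le m} B_j$. Summing the lower bounds over $j$, using the projection splitting of the previous paragraph, and interchanging the (nonnegative-term) sums over $i$ and $j$,
$$A\|K^*f\|^2 = A\sum_{j=1}^m \|K_j^* f_j\|^2 \le \sum_{j=1}^m A_j \|K_j^* f_j\|^2 \le \sum_{j=1}^m \sum_{i\in I} v_i^2 \|P_{\mathcal W_{ij}} f_j\|^2 = \sum_{i\in I} v_i^2 \|P_{\mathcal W_i} f\|^2,$$
and symmetrically $\sum_{i\in I} v_i^2 \|P_{\mathcal W_i}f\|^2 = \sum_{j=1}^m \sum_{i\in I} v_i^2 \|P_{\mathcal W_{ij}} f_j\|^2 \le \sum_{j=1}^m B_j\|f_j\|^2 \le B\|f\|^2$. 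Thus $\lbrace (\bigoplus_{j=1}^m \mathcal W_{ij}, v_i)\rbrace_{i\in I}$ satisfies the defining inequality (\ref{Eq:K-Fus-frame}) for the operator $\bigoplus_{j=1}^m K_j$ with bounds $A$ and $B$, which is precisely the assertion.

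I do not expect a serious obstacle here: the estimates are routine once the direct-sum bookkeeping is set up. The only point that requires care is the identity $P_{\bigoplus_j \mathcal W_{ij}} = \bigoplus_j P_{\mathcal W_{ij}}$, since without the orthogonality of the summands the middle term $\sum_{i} v_i^2\|P_{\mathcal W_i}f\|^2$ would not decouple across $j$ and the bounds would not combine cleanly. It is also worth remarking, in passing, that the right-hand inequality already supplies the fusion Bessel (convergence) condition, so no separate convergence argument is needed.
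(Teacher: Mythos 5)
Your proof is correct and follows essentially the same route as the paper's: both rest on the splitting $P_{\bigoplus_j \mathcal W_{ij}} = \bigoplus_j P_{\mathcal W_{ij}}$ and combine the componentwise bounds via $A=\min_j A_j$, $B=\max_j B_j$. The only cosmetic difference is that you treat general finite $m$ directly, whereas the paper reduces to $m=2$.
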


\begin{proof} It is sufficient to prove the result for $m=2$. Let $A_j$ and $B_j$ be frame bounds for the $K_j$-fusion frame $\lbrace (\mathcal W_{ij}, v_i)\rbrace_{i \in I}$ $j=1, 2$. Since $P_{\mathcal W_{i1} \oplus \mathcal W_{i2} } = P_{\mathcal W_{i1}} \oplus P_{\mathcal W_{i2}}$ $(i \in I)$, then for all $f \in \mathcal H_1$ and $g \in \mathcal H_2$ we have,
\begin{eqnarray*}
\min \lbrace A_1, A_2 \rbrace \|(K_1 \oplus K_2)^* (f \oplus g)\|^2 &=& \min \lbrace A_1, A_2 \rbrace \|(K_1 ^* \oplus K_2 ^*) (f \oplus g)\|^2\\
& =& \min \lbrace A_1, A_2 \rbrace \|K_1 ^* f \oplus K_2 ^* g \|^2\\
& =& \min \lbrace A_1, A_2 \rbrace ( \|K_1 ^* f\|^2 + \|K_2 ^* g\|^2)\\
&\leq& A_1 \|K_1 ^* f\|^2 + A_2 \|K_2 ^* g\|^2 \\
&\leq& \sum_{i \in I} v_i ^2 \|P_{\mathcal W_{i1}} (f)\|^2 + \sum_{i \in I} v_i ^2 \|P_{\mathcal W_{i2}} (g)\|^2 \\
&\leq& B_1 \|f\|^2 + B_2 \|g\|^2 \\
&\leq& \max \lbrace B_1, B_2 \rbrace (\|f\|^2 + \|g\|^2) \\
&=&  \max \lbrace B_1, B_2 \rbrace \|f \oplus g\|^2.
\end{eqnarray*}
Result follows from the fact that $\sum_{i \in I} v_i ^2 \|P_{\mathcal W_{i1}} (f)\|^2 + \sum_{i \in I} v_i ^2 \|P_{\mathcal W_{i2}} (g)\|^2 = \sum_{i \in I} v_i ^2 \|P_{\mathcal W_{i1} \oplus \mathcal W_{i2} } (f \oplus g)\|^2$.
\end{proof}

In the following result we will present some algebraic properties of $K$-fusion frame.

\begin{thm} Let $K_j\in \mathcal L(\mathcal H)$ and $\{a_j\}$ be a finite collection of scalars for $j=1, 2, \cdots, n$. Suppose $\lbrace (\mathcal W_i, v_i)\rbrace_{i \in I}$ is a $K_j$-fusion frame for $\mathcal H$, for all $j=1, 2, \cdots, n$. Then $\lbrace (\mathcal W_i, v_i)\rbrace_{i \in I}$ is also a $\sum_{j=1}^n a_j K_j$-fusion frame and $\prod_{j=1}^n K_j$-fusion frame for $\mathcal H$.
\end{thm}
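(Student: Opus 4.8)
The plan is to reduce both assertions to Theorem~\ref{result-thm2-NS}. Since $\{(\mathcal W_i,v_i)\}_{i\in I}$ is a $K_1$-fusion frame, it is in particular a fusion Bessel sequence; let $S_\mathcal W$ denote its fusion frame operator, so that $\langle S_\mathcal W f,f\rangle=\sum_{i\in I}v_i^2\|P_{\mathcal W_i}f\|^2$ and the upper inequality $\sum_{i\in I}v_i^2\|P_{\mathcal W_i}f\|^2\le B\|f\|^2$ holds for some $B>0$ and all $f\in\mathcal H$. Hence, by Theorem~\ref{result-thm2-NS}, for each of the two operators $K$ in the statement it suffices to produce a constant $A>0$ with $S_\mathcal W\ge A\,KK^*$, i.e.\ $A\|K^*f\|^2\le\langle S_\mathcal W f,f\rangle$ for all $f\in\mathcal H$. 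For each $j$ fix a lower $K_j$-fusion frame bound $A_j>0$, so that $A_j\|K_j^*f\|^2\le\langle S_\mathcal W f,f\rangle$ for all $f\in\mathcal H$.

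For the sum, put $K=\sum_{j=1}^n a_jK_j$, so $K^*=\sum_{j=1}^n\overline{a_j}\,K_j^*$. First I would apply the triangle inequality and then the Cauchy--Schwarz inequality to obtain
$$\|K^*f\|^2\le\Big(\sum_{j=1}^n|a_j|\,\|K_j^*f\|\Big)^2\le n\sum_{j=1}^n|a_j|^2\|K_j^*f\|^2 .$$
Substituting $\|K_j^*f\|^2\le A_j^{-1}\langle S_\mathcal W f,f\rangle$ gives $\|K^*f\|^2\le\big(n\sum_{j=1}^n|a_j|^2A_j^{-1}\big)\langle S_\mathcal W f,f\rangle$, that is $S_\mathcal W\ge A\,KK^*$ with $A=\big(n\sum_{j=1}^n|a_j|^2A_j^{-1}\big)^{-1}$. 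Theorem~\ref{result-thm2-NS} then yields that $\{(\mathcal W_i,v_i)\}_{i\in I}$ is a $\sum_{j=1}^n a_jK_j$-fusion frame.

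For the product, put $K=\prod_{j=1}^n K_j=K_1K_2\cdots K_n$, so $K^*=K_n^*K_{n-1}^*\cdots K_1^*$. By submultiplicativity of the operator norm and $\|K_j^*\|=\|K_j\|$ we get $\|K^*f\|\le\big(\prod_{j=2}^n\|K_j\|\big)\|K_1^*f\|$, hence $\|K^*f\|^2\le\big(\prod_{j=2}^n\|K_j\|^2\big)A_1^{-1}\langle S_\mathcal W f,f\rangle$. If $\prod_{j=2}^n\|K_j\|>0$ this is precisely $S_\mathcal W\ge A\,KK^*$ with $A=A_1\big(\prod_{j=2}^n\|K_j\|^2\big)^{-1}$; and if some $K_j=0$ with $j\ge 2$ then $K=0$, so $K^*=0$ and $S_\mathcal W\ge A\,KK^*$ holds trivially, e.g.\ with $A=1$. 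In either case Theorem~\ref{result-thm2-NS} shows that $\{(\mathcal W_i,v_i)\}_{i\in I}$ is a $\prod_{j=1}^n K_j$-fusion frame.

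The argument is essentially routine; the only points that need a little care are invoking Cauchy--Schwarz (not merely the triangle inequality) in the sum case so that the estimate runs through $\sum_j\|K_j^*f\|^2$, and the separate handling of the degenerate situation $\prod_{j\ge 2}\|K_j\|=0$ in the product case to avoid dividing by zero. Alternatively, the product case could be phrased through Douglas' theorem (Theorem~\ref{Thm-Douglas}) by noting $R\big(\prod_{j=1}^n K_j\big)\subseteq R(K_1)$ together with the characterization in Theorem~\ref{result-thm3-NS}, but the direct norm estimate above is shorter.
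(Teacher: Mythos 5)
Your proof is correct and follows essentially the same route as the paper: a direct norm estimate bounding $\|(\sum_{j}a_jK_j)^*f\|^2$ and $\|(\prod_{j}K_j)^*f\|^2$ by a constant multiple of $\sum_{i\in I}v_i^2\|P_{\mathcal W_i}f\|^2$ using the individual lower $K_j$-fusion frame bounds, with the upper Bessel bound carried over unchanged. The only cosmetic differences are that you route the estimate through Theorem~\ref{result-thm2-NS} and use Cauchy--Schwarz (giving the constant $n\sum_j|a_j|^2A_j^{-1}$ where the paper gets $(\sum_j|a_j|)^2/A$), and that you treat the degenerate case of a zero factor explicitly, which the paper simply declares omitted.
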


\begin {proof} Since $\lbrace (\mathcal W_i, v_i)\rbrace_{i \in I}$ is a $K_j$-fusion frame for $\mathcal H$, for all $j$, there exist $A, B>0$ such that $$A\|K_j^* f \|^2 \leq \sum_{i\in I}v_i^2\|P_{\mathcal W_i}f\|^2 \leq B \|f\|^2.$$
Then the conclusion follows from the following inequalities:
$$\dfrac{A}{(\sum\limits_j|a_j|)^2}\|(\sum_{j=1}^na_j K_j)^* f \|^2 \leq \sum_{i\in I}v_i^2\|P_{\mathcal W_i}f\|^2 \leq B \|f\|^2,$$
and
$$\frac {A} {\prod_{j=2}^n \|K_j ^* \|^2} \|(\prod_{j=1}^n K_j )^* f\|^2 \leq A \|K_1 ^* f \|^2 \leq \sum_{i\in I}v_i^2\|P_{\mathcal W_i}f\|^2 \leq B\|f\|^2 ,$$
for all $f\in \mathcal H$. It may be noted that the trivial case, $K_j$ being zero operator, has been omitted.
\end{proof}

Suppose $\mathcal U$ and $\mathcal V$ are two closed subspaces of $\mathcal H$ and $P_\mathcal U,~ P_\mathcal V$ are orthogonal projections from $\mathcal H$ onto $U,~V$, respectively, such that $P_\mathcal UP_\mathcal V = P_\mathcal VP_\mathcal U$. Then it is well-known that $P_\mathcal U P_\mathcal V$ is the orthogonal projection from $\mathcal H$ onto $\mathcal U\cap \mathcal V$. In the following we will discuss when the intersection of $K$-fusion frames is a $K$-fusion frame.

\begin{lem}\label{result-lem-intersection}
Suppose that $\lbrace \mathcal W_i \rbrace_{i \in I}$, $\lbrace \mathcal V_i \rbrace_{i \in I}$ are families of closed subspaces of $\mathcal H$ and $\lbrace  w_i \rbrace_{i \in I}$, $\lbrace  v_i \rbrace_{i \in I}$ are families of positive weights. Also suppose that the orthogonal projections $P_{\mathcal W_i}~ \&~ P_{\mathcal V_i}$ commute for each $i\in I$.  If $\lbrace (\mathcal W_i ,w_i)\rbrace_{i \in I}$ (or $\lbrace (\mathcal V_i, v_i)\rbrace_{i \in I}$) is a fusion Bessel sequences in $\mathcal H$, then so is $\lbrace (\mathcal W_i \cap \mathcal V_i,w_i)\rbrace_{i \in I}$ (or $\lbrace (\mathcal W_i \cap \mathcal V_i,v_i)\rbrace_{i \in I}$).
\end{lem}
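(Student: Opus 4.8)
The plan is to reduce the whole statement to a pointwise comparison of orthogonal projections and then sum against the weights. The decisive input is the fact recalled immediately before the lemma: since $P_{\mathcal W_i}$ and $P_{\mathcal V_i}$ commute, the product $P_{\mathcal W_i}P_{\mathcal V_i}$ is exactly the orthogonal projection onto $\mathcal W_i\cap\mathcal V_i$ (which is again a closed subspace, so this projection is well defined). Thus I would first record, for each fixed $i\in I$ and each $f\in\mathcal H$, the identity
\[
P_{\mathcal W_i\cap\mathcal V_i}f = P_{\mathcal W_i}P_{\mathcal V_i}f = P_{\mathcal V_i}P_{\mathcal W_i}f .
\]
Since $P_{\mathcal V_i}$ is a contraction, this gives $\|P_{\mathcal W_i\cap\mathcal V_i}f\| = \|P_{\mathcal V_i}(P_{\mathcal W_i}f)\| \le \|P_{\mathcal W_i}f\|$, and symmetrically $\|P_{\mathcal W_i\cap\mathcal V_i}f\| \le \|P_{\mathcal V_i}f\|$.

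Next, assume $\{(\mathcal W_i,w_i)\}_{i\in I}$ is a fusion Bessel sequence with bound $B>0$. Summing the first of the two inequalities above, weighted by $w_i^2$, yields
\[
\sum_{i\in I} w_i^2\,\|P_{\mathcal W_i\cap\mathcal V_i}f\|^2 \;\le\; \sum_{i\in I} w_i^2\,\|P_{\mathcal W_i}f\|^2 \;\le\; B\|f\|^2
\]
for every $f\in\mathcal H$. This is precisely the right-hand (Bessel) inequality of \eqref{Eq:Fus-frame} for the family $\{(\mathcal W_i\cap\mathcal V_i,w_i)\}_{i\in I}$; by the remark following Definition \ref{Defn:atom} (equivalently, by the standard fact that a fusion Bessel bound makes the associated synthesis map bounded), it follows that $\{(\mathcal W_i\cap\mathcal V_i,w_i)\}_{i\in I}$ is a fusion Bessel sequence. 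The parenthetical assertion — that $\{(\mathcal V_i,v_i)\}_{i\in I}$ being a fusion Bessel sequence forces $\{(\mathcal W_i\cap\mathcal V_i,v_i)\}_{i\in I}$ to be one — is obtained verbatim from the second inequality $\|P_{\mathcal W_i\cap\mathcal V_i}f\|\le\|P_{\mathcal V_i}f\|$, with the roles of $\mathcal W_i$ and $\mathcal V_i$ interchanged.

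There is essentially no obstacle here; the argument is short and elementary. The only point meriting care is the identity $P_{\mathcal W_i\cap\mathcal V_i}=P_{\mathcal W_i}P_{\mathcal V_i}$, which genuinely uses the commutativity hypothesis and is the well-known fact quoted before the lemma. If one wanted the proof to be self-contained, one could instead verify directly that under commutativity $P_{\mathcal W_i}P_{\mathcal V_i}$ is self-adjoint and idempotent with range $\mathcal W_i\cap\mathcal V_i$; but I would simply invoke the stated fact and keep the proof to the two displayed lines above.
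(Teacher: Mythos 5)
Your proposal is correct and follows essentially the same route as the paper's own proof: identify $P_{\mathcal W_i\cap\mathcal V_i}$ with $P_{\mathcal V_i}P_{\mathcal W_i}$ via the commutativity hypothesis, use that $P_{\mathcal V_i}$ is a contraction to get $\|P_{\mathcal W_i\cap\mathcal V_i}f\|\le\|P_{\mathcal W_i}f\|$, and sum against the weights to inherit the Bessel bound. No gaps.
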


\begin{proof}
Suppose $\lbrace (\mathcal W_i, w_i)\rbrace_{i \in I}$ is a fusion Bessel sequence. Then for some constant $B >0$, we have for all $f\in \mathcal H$
$$\sum_{i \in I} w_i ^2 \|P_{\mathcal W_i \cap \mathcal V_i} f\|^2 = \sum_{i \in I} w_i ^2 \|P_{\mathcal V_i } P_{\mathcal W_i} f\|^2 ~  ~ \leq  \sum_{i \in I} w_i ^2 \| P_{\mathcal W_i} f\|^2 ~ \leq B \|f\|^2$$ and hence $\lbrace (\mathcal W_i \cap \mathcal V_i, w_i)\rbrace_{i \in I}$ is a fusion Bessel sequence.
\end{proof}

\begin{thm}
Let $\lbrace (\mathcal W_i, w_i)\rbrace_{i \in I}$ be a fusion frame for $\mathcal H$ and $\mathcal V$ be a closed subspace of $\mathcal H$. Also assume $P_{\mathcal V}$ commutes with $P_{\mathcal W_i}$ for each $i\in I$. Then $\lbrace (\mathcal W_i \cap \mathcal V,w_i)\rbrace_{i \in I}$ will form a $P_{\mathcal V}$-fusion frame for $\mathcal H$.
\end{thm}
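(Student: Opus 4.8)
The plan is to verify directly the two defining inequalities of a $P_{\mathcal V}$-fusion frame, namely that there exist $A,B>0$ with
$$A\|P_{\mathcal V}^*f\|^2 \leq \sum_{i\in I} w_i^2\|P_{\mathcal W_i\cap\mathcal V}f\|^2 \leq B\|f\|^2$$
for all $f\in\mathcal H$, and to do so using the very frame bounds $A,B$ of the given fusion frame $\lbrace(\mathcal W_i,w_i)\rbrace_{i\in I}$. Two preliminary observations make this essentially mechanical. First, $P_{\mathcal V}$ is an orthogonal projection, hence self-adjoint, so $P_{\mathcal V}^*=P_{\mathcal V}$. Second, since $P_{\mathcal V}$ commutes with each $P_{\mathcal W_i}$, the remark preceding Lemma \ref{result-lem-intersection} applies and gives $P_{\mathcal W_i\cap\mathcal V}=P_{\mathcal W_i}P_{\mathcal V}=P_{\mathcal V}P_{\mathcal W_i}$ for every $i\in I$.

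For the upper (Bessel) bound I would simply invoke Lemma \ref{result-lem-intersection}: a fusion frame is in particular a fusion Bessel sequence, so $\lbrace(\mathcal W_i\cap\mathcal V,w_i)\rbrace_{i\in I}$ is a fusion Bessel sequence, which yields $\sum_{i\in I}w_i^2\|P_{\mathcal W_i\cap\mathcal V}f\|^2 \leq B\|f\|^2$ for all $f\in\mathcal H$, where $B$ is the upper frame bound of $\lbrace(\mathcal W_i,w_i)\rbrace_{i\in I}$.

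For the lower bound, the idea is to feed the vector $P_{\mathcal V}f$ into the lower fusion frame inequality of $\lbrace(\mathcal W_i,w_i)\rbrace_{i\in I}$ in place of $f$. This gives $A\|P_{\mathcal V}f\|^2 \leq \sum_{i\in I}w_i^2\|P_{\mathcal W_i}P_{\mathcal V}f\|^2$, and using $P_{\mathcal W_i}P_{\mathcal V}=P_{\mathcal W_i\cap\mathcal V}$ from the preliminary observations, the right-hand side is exactly $\sum_{i\in I}w_i^2\|P_{\mathcal W_i\cap\mathcal V}f\|^2$. Hence $A\|P_{\mathcal V}^*f\|^2 = A\|P_{\mathcal V}f\|^2 \leq \sum_{i\in I}w_i^2\|P_{\mathcal W_i\cap\mathcal V}f\|^2$ for all $f\in\mathcal H$. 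Combining this with the Bessel estimate above shows that $\lbrace(\mathcal W_i\cap\mathcal V,w_i)\rbrace_{i\in I}$ is a $P_{\mathcal V}$-fusion frame for $\mathcal H$ with bounds $A$ and $B$.

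There is no genuinely hard step here; the only point requiring care is that the commuting hypothesis is not decorative — it is precisely what identifies the intersection projection $P_{\mathcal W_i\cap\mathcal V}$ with the product $P_{\mathcal W_i}P_{\mathcal V}$, and without it neither the Bessel conclusion of Lemma \ref{result-lem-intersection} nor the substitution $f\mapsto P_{\mathcal V}f$ would reproduce the correct subspace projections. One could alternatively obtain the lower bound through the analysis operator and Douglas' theorem (Theorem \ref{Thm-Douglas}), in the style of Theorem \ref{Thm-Atom-Kfusion}, but the direct substitution argument above is shorter and self-contained.
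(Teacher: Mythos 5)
Your proof is correct and follows essentially the same route as the paper's: both apply the lower fusion frame inequality to $P_{\mathcal V}f$, use the commuting hypothesis to identify $P_{\mathcal W_i}P_{\mathcal V}$ with $P_{\mathcal W_i\cap\mathcal V}$, and invoke Lemma \ref{result-lem-intersection} for the upper Bessel bound. Your write-up is in fact slightly more explicit than the paper's about where each hypothesis is used, but there is no substantive difference.
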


\begin{proof} Suppose $\lbrace (\mathcal W_i, w_i)\rbrace_{i \in I}$ is a fusion frame for $\mathcal H$, then for some constants $A, B>0$ and using Lemma \ref{result-lem-intersection} we have $$A\|P_{\mathcal V} ^* f\|^2 = A\|P_{\mathcal V} f\|^2 ~ \leq \sum_{i \in I} w_i ^2 \|P_{\mathcal W_i } P_{\mathcal V} f\|^2 = \sum_{i \in I} w_i ^2 \|P_{\mathcal W_i \cap \mathcal V} f\|^2 ~\leq B\|f\|^2,$$ for all $f \in \mathcal H$. Hence  $\lbrace (\mathcal W_i \cap \mathcal V,w_i)\rbrace_{i \in I}$ is a  $P_{\mathcal V}$-fusion frame for $\mathcal H$.
\end{proof}

\begin{thm}  Let $\lbrace (\mathcal W_i, w_i)\rbrace_{i \in I}$ be a $K$-fusion frame for $\mathcal H$ where  $K \in \mathcal L(\mathcal H)$ and $\mathcal V$ be a closed subspace of $\mathcal H$. Also assume that $P_{\mathcal V}$ commutes with $P_{\mathcal W_i}$ for each $i\in I$ and $P_{\mathcal V}^{\dag}$ commutes with $K^*$. Then $\lbrace (\mathcal W_i \cap \mathcal V,w_i)\rbrace_{i \in I}$ forms a $K$-fusion frame for $R(P_{\mathcal V})$.
\end{thm}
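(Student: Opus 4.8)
The plan is to verify directly the two defining inequalities of a $K$-fusion frame for the family $\lbrace(\mathcal W_i\cap\mathcal V,w_i)\rbrace_{i\in I}$, restricting attention throughout to vectors in $R(P_{\mathcal V})=\mathcal V$, and then to explain why these inequalities really do express that this family is a $K$-fusion frame for $R(P_{\mathcal V})$.

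For the upper (Bessel) bound I would argue as follows. Being a $K$-fusion frame, $\lbrace(\mathcal W_i,w_i)\rbrace_{i\in I}$ is in particular a fusion Bessel sequence, and $P_{\mathcal W_i}$ commutes with $P_{\mathcal V}$ for every $i$; hence Lemma \ref{result-lem-intersection} applies and $\lbrace(\mathcal W_i\cap\mathcal V,w_i)\rbrace_{i\in I}$ is a fusion Bessel sequence, say with bound $B>0$. In particular $\sum_{i\in I}w_i^2\|P_{\mathcal W_i\cap\mathcal V}f\|^2\le B\|f\|^2$ for all $f\in\mathcal V$. For the lower bound, recall (from the paragraph preceding Lemma \ref{result-lem-intersection}) that since $P_{\mathcal W_i}$ and $P_{\mathcal V}$ commute, $P_{\mathcal W_i\cap\mathcal V}=P_{\mathcal W_i}P_{\mathcal V}$. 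Thus for $f\in\mathcal V$ we have $P_{\mathcal V}f=f$ and therefore $P_{\mathcal W_i\cap\mathcal V}f=P_{\mathcal W_i}f$ for each $i$. Plugging this into the lower $K$-fusion frame inequality for $\lbrace(\mathcal W_i,w_i)\rbrace_{i\in I}$ (which holds on all of $\mathcal H$, hence on $\mathcal V$) yields a constant $A>0$ with
$$A\|K^*f\|^2\le\sum_{i\in I}w_i^2\|P_{\mathcal W_i}f\|^2=\sum_{i\in I}w_i^2\|P_{\mathcal W_i\cap\mathcal V}f\|^2,\qquad f\in\mathcal V.$$

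Finally I would explain why the two displayed-type inequalities amount to the assertion that $\lbrace(\mathcal W_i\cap\mathcal V,w_i)\rbrace_{i\in I}$ is a $K$-fusion frame for $R(P_{\mathcal V})=\mathcal V$; this is where the hypothesis on $P_{\mathcal V}^{\dag}$ is used. Since $P_{\mathcal V}$ is an orthogonal projection it is self-adjoint, idempotent, and has closed range, so by the uniqueness of the Moore--Penrose inverse $P_{\mathcal V}^{\dag}=P_{\mathcal V}$; the assumption therefore says $P_{\mathcal V}$ commutes with $K^*$, equivalently with $K$. Consequently $K$ and $K^*$ map $\mathcal V$ into $\mathcal V$, the restriction $K_0:=K|_{\mathcal V}$ belongs to $\mathcal L(\mathcal V)$ with $K_0^*=K^*|_{\mathcal V}$, and for each $i$ the subspace $\mathcal W_i\cap\mathcal V$ is closed in $\mathcal V$ with $P^{\mathcal V}_{\mathcal W_i\cap\mathcal V}=P_{\mathcal W_i\cap\mathcal V}|_{\mathcal V}$. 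Hence the estimates above read $A\|K_0^*f\|^2\le\sum_{i\in I}w_i^2\|P^{\mathcal V}_{\mathcal W_i\cap\mathcal V}f\|^2\le B\|f\|^2$ for all $f\in\mathcal V$, which is exactly the claim. I do not expect a genuine obstacle in this argument; the one point demanding care is precisely this last bookkeeping step — checking that passing to the subspace $\mathcal V$ alters neither the relevant adjoint nor the relevant projections — and the two commutation hypotheses are tailored to make that step go through.
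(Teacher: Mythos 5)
Your proof is correct and follows the same basic outline as the paper's: the upper bound comes from Lemma \ref{result-lem-intersection}, and the lower bound from the identity $P_{\mathcal W_i\cap\mathcal V}=P_{\mathcal W_i}P_{\mathcal V}$ combined with the lower $K$-fusion inequality on $\mathcal H$. The one genuine difference is where the hypothesis on $P_{\mathcal V}^{\dag}$ enters. The paper uses it inside the lower-bound estimate: it writes $K^*f=K^*P_{\mathcal V}^{\dag}P_{\mathcal V}f$ for $f\in R(P_{\mathcal V})$, commutes $K^*$ past $P_{\mathcal V}^{\dag}$, and pays a factor $\|P_{\mathcal V}^{\dag}\|^{-2}$ in the lower constant. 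You instead observe that $P_{\mathcal V}f=f$ on $\mathcal V$ makes that detour unnecessary (and indeed $P_{\mathcal V}^{\dag}=P_{\mathcal V}$ has norm one, so the paper's constant is the same as yours), and you redeploy the commutation hypothesis for the bookkeeping the paper leaves implicit: that $K$ and $K^*$ leave $\mathcal V$ invariant, so that ``$K$-fusion frame for $R(P_{\mathcal V})$'' is a meaningful statement about the restricted operator $K|_{\mathcal V}$ and the projections computed inside $\mathcal V$. That last step is a real improvement in precision over the paper, which only verifies the displayed inequalities for $f\in R(P_{\mathcal V})$ without addressing the restriction; your version makes clear that the $P_{\mathcal V}^{\dag}$-commutation hypothesis is exactly what licenses the restriction, not merely a device for the norm estimate.
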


\begin{proof} Since $P_{\mathcal V}$ has closed range, $P_{\mathcal V}^\dag$ exists. $\lbrace (\mathcal W_i, w_i)\rbrace_{i \in I}$ is a $K$-fusion frame for $\mathcal H$ implies that there exist positive constants $A, B$ such that
$$A\|K^*f\|^2\leq\sum_{i\in I}w_i^2\|P_{\mathcal W_i}f\|^2\leq B\|f\|^2,$$for all $f\in \mathcal H$. Therefore using Lemma \ref{result-lem-intersection}, for all $f\in R(P_{\mathcal V})$, we have
$\frac{A}{\|P_\mathcal V ^ \dag\|^2} \|K^*f\|^2=\frac{A}{\|P_\mathcal V ^ \dag\|^2} \|K^*P_\mathcal V ^\dag P_\mathcal V f\|^2\leq A\|K^*P_\mathcal V f\|^2\leq \sum_{i\in I}w_i^2\|P_{\mathcal W_i}P_\mathcal V f\|^2 = \sum_{i\in I}w_i^2\|P_{\mathcal W_i\cap \mathcal V} f\|^2\leq B \|f\|^2.$ Hence $\lbrace (\mathcal W_i \cap \mathcal V, w_i)\rbrace_{i \in I}$ is a $K$-fusion frame for $R(P_{\mathcal V})$.
\end{proof}

\section{Conclusion}
In the area of frame theory, the study of atomic subspaces has a great significance to characterize fusion frames with respect to a bounded linear operator, which we have analyzed in Sections \ref{Sec-atom} \& \ref{Sec-result}.

$K$-fusion frames come naturally when one needs to reconstruct functions from a large data in the range of a bounded linear operator. $K$-fusion frames can be further studied to rich the existing literature of fusion frames and their applications in coding theory, sensor network, etc.

\section*{Acknowledgements}
The first author acknowledges the financial support of MHRD, Government of India.



\end{document}